\newtheorem {theorem} {Theorem}%[section]
\newtheorem {lemma}  [theorem]{Lemma}
\begin{document}

\title[Generalized rational first integrals]
{Generalized rational first integrals\\
of analytic differential systems}

\author[Wang Cong, Jaume Llibre and Xiang Zhang]
{Wang Cong$^1$, Jaume Llibre$^2$ and Xiang Zhang$^1$}

\address{$^1$ Department of Mathematics, Shanghai Jiaotong University, Shanghai,
200240, P. R. China}
\email{wangcong86@sjtu.edu.cn, xzhang@sjtu.edu.cn}

\address{$^2$ Departament de Matem\`{a}tiques, Universitat Aut\`{o}noma de Barcelona,
08193 Bellaterra, Barcelona, Catalonia, Spain}
\email{jllibre@mat.uab.cat}

\subjclass[2000]{34A34, 34C05, 34C14}

\keywords{Differential systems, generalized rational first integrals, resonance}

\date{}

\begin{abstract}
In this paper we mainly study the necessary conditions for the
existence of functionally independent generalized rational first
integrals of ordinary differential systems via the resonances. The
main results extend some of the previous related ones, for instance
the classical Poincar\'e's one \cite{Po}, the Furta's one \cite{Fu},
part of Chen {\it et al}'s ones \cite{CYZ}, and the Shi's one
\cite{Sh}. The key point in the proof of our main results is that
functionally independence of generalized rational functions implies
the functionally independence of their lowest order rational
homogeneous terms.
\end{abstract}

\maketitle

\section{Introduction and statement of the main results}\label{s1}

The rational first integrals in analytic differentiable systems and
mainly in the particular case of polynomial differentiable systems
has been studied intensively, specially inside the Darboux theory of
integrability, see for instance \cite{Ll}, \cite{GGG}, \cite{PS},
\cite{Si}. In this paper we want to study the generalized rational
first integrals of the analytic differential systems.

\smallskip

Consider analytic differential systems in $(\mathbb C^n,0)$
\begin{equation}\label{e1.1}
\dot x=f(x),\qquad x\in (\mathbb C^n,0).
\end{equation}
A function $F(x)$ of form $G(x)/H(x)$ with $G$ and $H$ analytic
functions in $(\mathbb C^n,0)$ is a {\it generalized rational first
integral} if
\[
\left\langle f(x),\,\partial_x F(x)\right\rangle\equiv 0, \qquad x\in (\mathbb C^n,0),
\]
where $\langle\cdot,\cdot\rangle$ denotes the inner product of two
vectors in $\mathbb C^n$,  and
$\partial_xF=(\partial_{x_1}F,\ldots,\partial_{x_n}F)$ is the
gradient of $F$ and $\partial_{x_i}F=\partial F/\partial {x_i}$. As
usually, if $G$ and $H$ are polynomial functions, then $F(x)$ is a
{\it rational first integral}. If $H$ is a non--zero constant, then
$F(x)$ is  an {\it analytic first integral}. So generalized rational
first integrals include rational first integrals and analytic first
integrals as particular cases.

\smallskip

If $f(0)\ne 0$, it is well--known from the Flow--Box Theorem that
system \eqref{e1.1} has an analytic first integral in a neighborhood
of the origin. If $f(0)=0$, i.e. $x=0$ is a {\it singularity} of
system \eqref{e1.1}, the existence of first integrals for system
\eqref{e1.1} in $(\mathbb C^n,0)$ is usually much involved.

\smallskip

Denote by $A=Df(0)$ the Jacobian matrix of $f(x)$ at $x=0$. Let
$\lambda=(\lambda_1,\ldots,\lambda_n)$ be the $n$--tuple of
eigenvalues of $A$. We say that the eigenvalues $\lambda$ satisfy a
$\mathbb Z^+$--{\it resonant condition} if
\[
\left\langle \lambda, \mathbf k\right\rangle=0, \qquad\mbox{ for some
} \mathbf  k\in \left(\mathbb Z^+\right)^n,\quad \mathbf k\ne 0,
\]
where $\mathbb Z^+=\mathbb N\cup\{0\}$ and $\mathbb N$ is the set of
positive integers. The eigenvalues $\lambda$ satisfy a $\mathbb
Z$--{\it resonant condition} if
\[
\left\langle \lambda, \mathbf k\right\rangle=0, \qquad\mbox{ for some
} \mathbf  k\in \mathbb N^n,\quad \mathbf k\ne 0,
\]
where $\mathbb Z$ is the set of integers.

\smallskip

Poincar\'e \cite{Po} was the first one in studying the relation
between the existence of analytic first integrals and resonance, he
obtained the following classical result (for a proof, see for
instance \cite{Fu}).

\smallskip

\noindent{\bf Poincar\'e Theorem} {\it If the eigenvalues $\lambda$
of $A$ do not satisfy any $\mathbb Z^+$--resonant conditions, then
system \eqref{e1.1} has no analytic first integrals in $(\mathbb
C^n,0)$.}

\smallskip

Recall that $k$ ($k< n$) functions are {\it functionally independent}
in an open subset $U$ of $\mathbb C^n$ if their gradients have rank
$k$ in a full Lebesgue measure subset of $U$. Obviously an
$n$--dimensional nontrivial autonomous system can have at most $n-1$
functionally independent first integrals, where nontrivial means that
the associated vector field does not vanish identically.

\smallskip

In 2003 Li, Llibre and Zhang \cite{LLZ03} extended the Poincar\'e's
result to the case that $\lambda$ admit one zero eigenvalue and the
others are not $\mathbb Z^+$--resonant.

\smallskip

In 2008 Chen, Yi and Zhang \cite{CYZ} proved that the number of
functionally independent analytic first integrals for system
\eqref{e1.1} does not exceed the maximal number of linearly
independent elements of $\{\mathbf k\in(\mathbb Z^+)^n:\, \langle
\mathbf k, \lambda\rangle=0,\, \mathbf k\ne 0\}$.

\smallskip

In 2007 the Poincar\'e's result was extended by Shi \cite{Sh} to the
$\mathbb Z$--resonant case. He proved that if system \eqref{e1.1} has
a rational first integral, then the eigenvalues $\lambda$ of $A$
satisfy a $\mathbb Z$--resonant condition. In other words, if
$\lambda$ do not satisfy any $\mathbb Z$--resonant condition, then
system \eqref{e1.1} has no rational first integrals in $(\mathbb
C^n,0)$.

\smallskip

The aim of this paper is to improve the above results by studying the
existence of more than one functionally independent rational first
integrals. Our first main result is the following.

\smallskip

\begin{theorem}\label{t1}
Assume that the differential system \eqref{e1.1} satisfies $f(0)=0$
and let $\lambda=(\lambda_1,\ldots,\lambda_n)$ be the eigenvalues of
$Df(0)$. Then the number of functionally independent generalized
rational first integrals of system \eqref{e1.1} in $(\mathbb C^n,0)$
is at most the dimension of the minimal vector subspace of $\mathbb
R^n$ containing the set $\{\mathbf k\in\mathbb Z^n:\, \langle \mathbf
k, \lambda\rangle=0,\,\mathbf k\ne 0\}$.
\end{theorem}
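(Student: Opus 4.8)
The plan is to reduce the nonlinear problem to a counting problem for the linearization $\dot x=Ax$, $A=Df(0)$, through lowest--order homogeneous terms. Write each generalized rational first integral as $F_i=G_i/H_i$ and expand $G_i,H_i$ into homogeneous polynomials; if $G_i=G_{i,p_i}+\cdots$ and $H_i=H_{i,q_i}+\cdots$ have nonzero lowest--order parts of degrees $p_i,q_i$, I call $\tilde F_i:=G_{i,p_i}/H_{i,q_i}$ the lowest--order rational homogeneous term of $F_i$ (taking the common--denominator form first, so that the $\tilde F_i$ are genuinely homogeneous rational functions). First I would establish the statement highlighted in the abstract: if $F_1,\ldots,F_m$ are functionally independent, then so are $\tilde F_1,\ldots,\tilde F_m$. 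I expect this to be the main obstacle, to be proved by comparing the lowest--order parts of the gradients $\partial_x F_i$ with $\partial_x\tilde F_i$ and showing that a rank drop of the Jacobian of $(\tilde F_1,\ldots,\tilde F_m)$ on a full--measure set would force a rank drop of the Jacobian of $(F_1,\ldots,F_m)$.

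The second step is to show each $\tilde F_i$ is a rational first integral of the linear system $\dot x=Ax$. Writing $f(x)=Ax+(\text{higher order})$ and inserting a generic $F=G/H$ into $\langle f,\partial_x F\rangle\equiv 0$, the polynomial numerator $\langle f,(\partial_x G)H-G(\partial_x H)\rangle$ vanishes identically. Its lowest--order homogeneous part, of degree $p+q$, is $\langle Ax,(\partial_x G_p)H_q-G_p(\partial_x H_q)\rangle$, since the contributions of the nonlinear part of $f$ have strictly higher degree; hence this part vanishes, which after dividing by $H_q^2$ gives $\langle Ax,\partial_x\tilde F\rangle=0$. Thus $\tilde F=G_p/H_q$ is a homogeneous rational first integral of $\dot x=Ax$.

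Next I would classify the homogeneous rational first integrals of the linear system. Let $X=\sum_i (Ax)_i\,\partial_{x_i}$, which preserves the degree of homogeneity. For $\tilde F=P/Q$ with $P,Q$ coprime and homogeneous, $X\tilde F=0$ gives $Q\,XP=P\,XQ$, and coprimality together with $\deg XP=\deg P$ and $\deg XQ=\deg Q$ forces $XP=cP$ and $XQ=cQ$ for a common constant $c$; so $P$ and $Q$ are eigenvectors of $X$ with the same eigenvalue. Using the Jordan--Chevalley decomposition $A=S+N$ ($S$ semisimple, $N$ nilpotent, $SN=NS$), the induced splitting $X=X_S+X_N$ is the corresponding decomposition on homogeneous polynomials, whence $X_S P=cP$ and $X_S Q=cQ$. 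After a linear change putting $S$ in diagonal form (which preserves the eigenvalues, the resonance set, and functional independence), $P$ and $Q$ become combinations of monomials $x^{\mathbf k}$ of a fixed degree with $\langle\lambda,\mathbf k\rangle=c$; any two such monomials differ by an exponent in $\mathcal L:=\{\mathbf k\in\mathbb Z^n:\langle\mathbf k,\lambda\rangle=0\}$. Therefore $\tilde F$ is a rational function of the resonant Laurent monomials $\{x^{\mathbf k}:\mathbf k\in\mathcal L\}$.

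Finally I would conclude by counting. Choosing a basis $\mathbf k^{(1)},\ldots,\mathbf k^{(d)}$ of $\mathcal L$, where $d=\operatorname{rank}\mathcal L$ equals the dimension of the minimal subspace of $\mathbb R^n$ containing $\mathcal L$, the monomials $u_j=x^{\mathbf k^{(j)}}$ are functionally independent and every resonant monomial, hence every $\tilde F_i$, is a function of $u_1,\ldots,u_d$. By the chain rule the gradients $\partial_x\tilde F_i$ lie in the span of $\partial_x u_1,\ldots,\partial_x u_d$, so the Jacobian of $(\tilde F_1,\ldots,\tilde F_m)$ has rank at most $d$. Since Step~1 guarantees this rank equals $m$, we obtain $m\le d$, which is the asserted bound.
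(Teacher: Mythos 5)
Your Step 1 is false as stated, and this is the genuine gap. Functional independence of $F_1,\ldots,F_m$ does \emph{not} imply functional independence of their lowest--order terms $\tilde F_1,\ldots,\tilde F_m$: take $F_1=x_1$ and $F_2=x_1+x_2^2$, which are functionally independent, yet both have lowest--order term $x_1$. Your proposed mechanism --- that a rank drop of the Jacobian of $(\tilde F_1,\ldots,\tilde F_m)$ would force a rank drop of the Jacobian of $(F_1,\ldots,F_m)$ --- fails precisely because the lowest--degree homogeneous part of a Jacobian minor of the $F_i$ need not equal the corresponding minor of the $\tilde F_i$: when the latter vanishes identically (as in the example, where $\det\partial(F_1,F_2)/\partial(x_1,x_2)=2x_2\not\equiv 0$ but the minor of the lowest--order gradients is $0$), cancellation occurs and the true lowest--order part of the minor comes from higher--order terms of the $F_i$. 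The abstract's phrasing misled you; what the paper actually proves (its Lemma 3.2, due to Ziglin and to Baider, Churchill, Rod and Singer) is weaker and is exactly what survives: one may replace $F_2,\ldots,F_m$ by suitable \emph{polynomial combinations} $\widetilde F_i=P_i(F_1,\ldots,F_m)$, still functionally independent, whose lowest--order rational homogeneous terms are functionally independent. This replacement is harmless for the theorem since polynomials in first integrals are again first integrals --- a point your argument never needs but the corrected one does.

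The repair requires real work: when $F_1^0,\ldots,F_k^0$ are independent but $F_1^0,\ldots,F_{k+1}^0$ are not, one invokes the equivalence of functional and algebraic independence for rational functions (the paper's Lemma 3.1, proved via transcendence degree and derivations) to get a minimal--degree polynomial relation $P(F_1^0,\ldots,F_{k+1}^0)\equiv 0$, sets $\widehat F_{k+1}=P(F_1,\ldots,F_{k+1})$, checks via $\widehat{\mathcal D}=\mathcal D\cdot\frac{\partial P}{\partial z_{k+1}}(F_1,\ldots,F_{k+1})\not\equiv 0$ that independence is preserved, and shows that the nonnegative integer $\mu(F_1,\ldots,F_{k+1})=d(\mathcal D)+k+1-\sum_j d(F_j)$ strictly decreases, so the process terminates at $\mu=0$, which is equivalent to independence of the lowest--order terms. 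Your Steps 2--4 are essentially correct and match the paper: Step 2 reproduces the paper's Lemma 3.3 (vanishing of the degree--$(p+q)$ part of the numerator identity); your Step 3 classification via the Jordan--Chevalley decomposition, using that an eigenvector of $X$ with eigenvalue $c$ lies in the generalized eigenspace on which $X_S$ acts as $c$, is a clean variant of the paper's argument (which instead splits $\mathcal H_n^l$ into resonant and non--resonant monomial subspaces and uses invertibility of $L_c$ on the latter); and your Step 4 counting through a $\mathbb Z$--basis of the resonance lattice is more explicit than the paper's corresponding assertion. Minor loose ends: you should handle the case where a lowest--order term is constant (replace $F$ by $F-a$, as the paper notes), and the coprimality reduction of $G^0/H^0$ should be made before the eigenvalue argument. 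But without Lemma 3.2 or an equivalent substitute, the proposal does not prove the theorem.
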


\smallskip

We remark that Theorem \ref{t1} extends all the results mentioned
above, i.e. the one of Poincar\'e \cite{Po}, the Theorem 1.1 of Chen,
Yi and Zhang \cite{CYZ}, and  the Theorem 1 of Shi \cite{Sh}. We
should mention that the methods of the above mentioned papers are not
enough to study the existence of more than one functionally
independent generalized rational first integrals. Here we will use a
different approach to prove Theorem \ref{t1}. Our key technique will
be the Lemma \ref{l3.2} given in Section \ref{s3}, which shows that
the functionally independence of generalized rational functions
implies the functionally independence of their lowest order rational
homogeneous terms.

\smallskip

We should say that Theorem \ref{t1} has some relation with
Propositions 3.5 and 5.4 of Goriely \cite{Go}. In the former the
author established a relation between the weight degrees of
independent algebraic first integrals of a weight homogeneous vector
field and its Kowalevskaya exponents. And in the latter he provided a
necessary condition for the existence of independent analytic first
integrals of a weight homogeneous vector field.

\smallskip

We note that if the linear part $Df(0)$ of \eqref{e1.1} has all its
eigenvalues zero, then the result of Theorem \ref{t1} is trivial. For
studying these cases we consider  semi--quasi--homogeneous systems.
Let $f=(f_1,\ldots,f_n)$ be vector functions. Then system
\eqref{e1.1} is {\it quasi--homogeneous of degree $q\in \mathbb
N\setminus\{1\}$} with exponents $s_1,\ldots,s_n\in\mathbb
Z\setminus\{0\}$ if for all $\rho>0$
\[
f_i\left(\rho^{s_1}x_1,\ldots,\rho^{s_n}x_n\right)=\rho^{q+s_i-1}
f_i(x_1,\ldots,x_n),\quad i=1,\ldots,n.
\]
The exponents $\mathbf s:=(s_1,\ldots,s_n)$ are called {\it weight
exponents}, and the number $q+s_i-1$ is called the {\it weight
degree} of $f_i$, i.e., $f_i$ is a {\it quasi--homogeneous function
of weight degree $q+s_i-1$}. A vector function $f$ is {\it
quasi--homogeneous of weight degree $q$ with weight exponent}$\mathbf
s$ if each component $f_i$ is quasi--homogeneous of weight degree
$q$, i.e. $f_i\left(\rho^{s_1}x_1,\ldots,\rho^{s_n}x_n\right)=
\rho^{q}f_i(x_1,\ldots,x_n)$ for $i=1,\ldots,n$.

\smallskip

System \eqref{e1.1} is {\it semi--quasi--homogeneous} of degree $q$
with the weight exponent $\mathbf s$ if
\begin{equation}\label{e1.5}
f(x)=f_q(x)+f_h(x),
\end{equation}
where $\rho^{\mathbf E-\mathbf S}f_q$ is quasi--homogeneous of degree
$q$ with weight exponent $\mathbf s$ and $\rho^{\mathbf E-\mathbf
S}f_h$ is the sum of quasi--homogeneous of degree either all larger
than $q$ or all less than $q$  with weight exponent $\mathbf s$. The
former (resp. latter) is called {\it positively} (resp. {\it
negatively}) semi--quasi--homogeneous. Here we have used the
notations: $\mathbf E$ is the $n\times n$ identity matrix, $\mathbf
S$ is the $n\times n$ diagonal matrix $\mbox{diag}(s_1,\ldots,s_n)$,
and $\rho^{\mathbf E-\mathbf S}=\mbox{diag}\left(\rho^{1-s_1},\ldots,
\rho^{1-s_n}\right)$.

\smallskip

We note that for any give exponent $\mathbf s$ an analytic
differential system \eqref{e1.1} can be written as a positively
semi--quasi--homogeneous system, and also as a negatively one if it
is a polynomial.

\smallskip

Assume that $\rho^{\mathbf E-\mathbf S}f_q$ is quasi--homogeneous of
weight degree $q>1$ with weight exponent $\mathbf s$. Set $\mathbf
W=\mathbf S/(q-1)$. Any solution $c=(c_1,\ldots,c_n)$ of
\begin{equation}\label{e1.6}
f_q(c)+\mathbf W c=0,
\end{equation}
is called a {\it balance}. Denote by $\mathcal B$ the set of
balances. For each balance $c$, the Jacobian matrix $K=D
f_q(c)+\mathbf W$ is called the {\it Kowalevskaya matrix} at $c$ and
its eigenvalues are called the {\it Kowalevskaya exponents}, denoted
by $\lambda_c$. Let $d_c$ be the dimension of the minimal vector
subspace of $\mathbb R^n$ containing the set
\[
\left\{\mathbf k\in \mathbb Z^n:\, \langle \mathbf
k,\lambda_c\rangle=0,\,\mathbf k\ne 0\right\}.
\]

\smallskip

\begin{theorem}\label{t4}
Assume that system \eqref{e1.1} is semi--quasi--homogeneous of weight
degree $q$ with weight exponent $\mathbf s$, and $f(x)$ satisfies
\eqref{e1.5} with $f(0)=0$. Then the number of functionally
independent generalized rational first integrals of \eqref{e1.1} is
at most $d=\min\limits_{c\in \mathcal B}d_c$.
\end{theorem}

\smallskip

Theorem \ref{t4} is an extension of Theorem 1 of Furta \cite{Fu}, of
Corollary 3.7 of \cite{Go} and of Theorem 2 of Shi \cite{Sh}. In some
sense it is also an extension of the main results of Yoshida
\cite{Yo1,Yo2}, where he proved that if a quasi--homogenous
differential system is algebraically integrable, then every
Kowalevkaya exponent should be a rational number.

\smallskip

Theorems \ref{t1} and \ref{t4} studied the existence of functionally
independent generalized rational first integrals of system
\eqref{e1.1} in a neighborhood of a singularity. Now we turn to
investigate the existence of generalized rational first integrals of
system \eqref{e1.1} in a neighborhood of a periodic orbit. The {\it
multipliers} of a periodic orbit are the eigenvalues of the linear
part of the Poincar\'e map at the fixed point corresponding to the
periodic orbit. Recall that a Poincar\'e map associated to a periodic
orbit is defined on a transversal section to the periodic orbit, and
its linear part has the eigenvalue $1$ along the direction tangent to
the periodic orbit.

\smallskip

\begin{theorem}\label{t5}
Assume that the analytic differential system \eqref{e1.1} has a
periodic orbit with multipliers $\mu=(\mu_1,\dots,\mu_{n-1})$. Then
the number of functionally independent generalized rational first
integrals of system \eqref{e1.1} in a neighborhood of the periodic
orbit is at most the maximum number of linearly independent vectors
in $\mathbb R^n$ of the set
\[
\{\mathbf k\in \mathbb Z^{n-1}:\, \mu^{\mathbf k}=1,\,\mathbf k\ne 0\}.
\]
\end{theorem}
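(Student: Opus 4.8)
The plan is to reduce Theorem \ref{t5}, which concerns a periodic orbit, to the already-established singularity case of Theorem \ref{t1} by passing to the Poincar\'e map on a transversal section. First I would fix the periodic orbit $\gamma$ and choose an analytic transversal section $\Sigma$ of dimension $n-1$ through a point $p\in\gamma$. The Poincar\'e (first return) map $P\colon(\Sigma,p)\to(\Sigma,p)$ is then an analytic diffeomorphism with the fixed point $p$, and its linear part $DP(p)$ has eigenvalues $\mu=(\mu_1,\dots,\mu_{n-1})$. The guiding principle is that any generalized rational first integral $F$ of the flow \eqref{e1.1}, being constant along trajectories, restricts to a generalized rational function on $\Sigma$ that is invariant under $P$, i.e. $F|_\Sigma\circ P=F|_\Sigma$; moreover functionally independent first integrals of the flow restrict to functionally independent $P$-invariants on $\Sigma$, since the flow direction is transverse to $\Sigma$ and thus contributes no loss of rank.

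The second step is to develop a discrete-time analogue of the counting bound for the invariants of the analytic diffeomorphism $P$. Writing $G=F|_\Sigma$, the invariance $G\circ P=G$ expressed through $G=u/v$ with $u,v$ analytic forces a multiplicative relation on the lowest-order homogeneous parts under the linearization $x\mapsto DP(p)\,x$. The resonant monomials here are governed by the multiplicative relations $\mu^{\mathbf k}=\mu_1^{k_1}\cdots\mu_{n-1}^{k_{n-1}}=1$ rather than the additive relations $\langle\mathbf k,\lambda\rangle=0$ of the continuous case; this correspondence is exactly what one obtains by formally setting $\mu_j=e^{\lambda_j}$, which turns $\langle\mathbf k,\lambda\rangle=0$ into $\mu^{\mathbf k}=1$. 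Thus I would mirror the proof of Theorem \ref{t1}: reduce to the lowest-order rational homogeneous terms via the analogue of Lemma \ref{l3.2}, and bound the number of functionally independent $P$-invariants by the maximal number of linearly independent vectors in $\{\mathbf k\in\mathbb Z^{n-1}:\,\mu^{\mathbf k}=1,\,\mathbf k\ne 0\}$.

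Concretely, I expect the cleanest route is to suspend the diffeomorphism into a continuous setting or, alternatively, to directly adapt the homogeneous-decomposition argument of Section \ref{s3} to the discrete action generated by $DP(p)$. In the discrete case the eigenvalue action on a monomial $x^{\mathbf k}$ is by the multiplier $\mu^{\mathbf k}$, so a nonconstant invariant of the linear map requires some $\mu^{\mathbf k}=1$; functional independence of $m$ invariants then yields $m$ vectors $\mathbf k$ that must be linearly independent over $\mathbb R$ in order to distinguish the invariants, which gives the asserted upper bound. The main obstacle I anticipate lies in establishing the discrete analogue of Lemma \ref{l3.2}: one must show that functional independence of the full generalized rational invariants $G_1,\dots,G_m$ on $\Sigma$ descends to functional independence of their lowest-order rational homogeneous parts with respect to the $DP(p)$-action, and that these lowest-order parts are genuine invariants of the linearization. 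Handling the non-semisimple (Jordan block) structure of $DP(p)$ and ensuring that the restriction-to-$\Sigma$ map preserves both the generalized-rational form and the functional independence count are the delicate points; once these are secured, the eigenvalue bookkeeping translating $\mu^{\mathbf k}=1$ into the dimension count proceeds exactly as in the proof of Theorem \ref{t1}.
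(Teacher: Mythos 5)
Your proposal follows essentially the same route as the paper: restrict the first integrals to the analytic Poincar\'e map $P(x)=Bx+P_h(x)$ on a transversal section, reduce to the lowest-order rational homogeneous parts, use the relative-primeness of $G_i^0,H_i^0$ together with the discrete spectral lemma (Lemma \ref{l6.1}, the map analogue of Lemma \ref{l3.4}) to force every monomial ratio in $F_i^0$ to satisfy $\mu^{\mathbf k}=1$, and then count linearly independent resonance vectors exactly as in Theorem \ref{t1}. The main obstacle you anticipate dissolves on inspection: Lemma \ref{l3.2} is a statement about generalized rational functions alone, with no reference to the dynamics (one only needs that polynomial combinations of $P$-invariants are again $P$-invariants), so it applies verbatim in the discrete setting, and the non-semisimple part of $DP(p)$ is handled just as in the proof of Theorem \ref{t1}, since the operator $L_c(h)(x)=h(Bx)-c\,h(x)$ with $B$ in lower triangular Jordan form preserves the resonant/nonresonant splitting of $\mathcal H_n^m$.
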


\smallskip

Here and after, for vectors $x=(x_1,\ldots,x_n)$ and $\mathbf
k=(k_1,\ldots,k_n)$ we use $x^{\mathbf k}$ to denote the product
$x_1^{k_1}\cdot\ldots\cdot x_n^{k_n}$.

\smallskip

Finally we consider the periodic differential systems
\begin{equation}\label{e1.2}
\dot x=f(t,x),\qquad (t,x)\in \mathbb S^1\times (\mathbb C^n,0),
\end{equation}
where $\mathbb S^1=\mathbb R/(2\pi \mathbb N)$, and $f(t,x)$ is
analytic in its variables and periodic of period $2\pi$ in $t$.
Assume that $x=0$ is a constant solution of \eqref{e1.2}, i.e.
$f(t,0)=0$.

\smallskip

A non--constant function $F(t,x)$ is a {\it generalized rational
first integral} of system \eqref{e1.2} if $F(t,x)=G(t,x)/H(t,x)$ with
$G(t,x)$ and $H(t,x)$ analytic in their variables and $2\pi$ periodic
in $t$, and it satisfies
\[
\frac{\partial F(t,x)}{\partial t}+\langle
\partial_xF(t,x),f(t,x)\rangle\equiv 0 \quad \mbox{ in }  \mathbb
S^1\times (\mathbb C^n,0).
\]
If $H(t,x)$is constant and non--zero, then $F(t,x)$ is an {\it
analytic first integral}. If $G(t,x)$ and $H(t,x)$ are polynomials in
$x$ we say that $F(t,x)$ is a {\it rational first integral}. If
$G(t,x)$ and $H(t,x)$ are both homogeneous polynomials in $x$ of
degrees $l$ and $m$ respectively, we say that $G/H$ is a {\it
rational homogeneous first integral of degree} $l-m$.

\smallskip

Since $f(t,0)=0$, we can write system \eqref{e1.2} as
\begin{equation}\label{e1.3}
\dot x=A(t)x+g(t,x),
\end{equation}
where $A(t)$ and $g(t,x)=O(x^2)$ are $2\pi$ periodic in $t$. Consider
the linear equation
\begin{equation}\label{e1.4}
\dot x=A(t)x.
\end{equation}
Let $x_0(t)$ be the solution of \eqref{e1.4} satisfying the initial
condition $x_0(0)=x_0$ with $x_0\in (\mathbb C^n,0)$. The {\it
monodromy operator} associated to \eqref{e1.4} is the map $\mathcal
P:\,(\mathbb C^{n-1},0)\rightarrow (\mathbb C^{n-1},0)$ defined by
$\mathcal P (x_0)=x_0(2\pi)$.

\smallskip

We say that functions $F_1(t,x),\ldots,F_m(t,x)$ are {\it
functionally independent} in $\mathbb S^1\times (\mathbb C^n,0)$ if
$\partial_xF_1(t,x),\ldots,\partial_xF_m(t,x)$ have the rank $m$ in a
full Lebesgue measure subset of $\mathbb S^1\times (\mathbb C^n,0)$.

\begin{theorem}\label{t3}
Let $\mu=(\mu_1,\ldots,\mu_{n})$ be the eigenvalues of the
monodromy operator $($i.e. the {\it characteristic multipliers} of
\eqref{e1.4}$)$. Then the number of functionally independent
generalized rational first integrals of system \eqref{e1.2} is at
most the maximum number of linearly independent vectors in $\mathbb
R^n$ of the set
\[
\Xi:=\left\{\mathbf k\in\mathbb Z^n:\, \mu^{\mathbf k}=1,\mathbf k\ne
0\right\}\subset \mathbb Z^n.
\]
\end{theorem}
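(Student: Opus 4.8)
The plan is to mimic the strategy used for Theorem \ref{t1}, transporting it from the flow of the linear part to the monodromy operator of the periodic linear part. First I would take $m$ functionally independent generalized rational first integrals $F_1,\dots,F_m$ of \eqref{e1.2}, write each $F_i=G_i/H_i$ and expand $G_i$ and $H_i$ into homogeneous terms in $x$ with coefficients analytic and $2\pi$ periodic in $t$. Denoting by $R_i(t,x)$ the lowest order rational homogeneous term of $F_i$ (the quotient of the lowest order homogeneous parts of $G_i$ and $H_i$), I would invoke the $t$--parametrized version of Lemma \ref{l3.2}: since the rank of the matrix $(\p_xF_1,\dots,\p_xF_m)$ is $m$ on a full measure subset of $\ci^1\times(\C^n,0)$, for a generic fixed $t$ the functions $F_i(t,\cdot)$ are functionally independent, and hence so are their lowest order rational homogeneous terms $R_i(t,\cdot)$.

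The second step is to identify the equation satisfied by the $R_i$. Writing \eqref{e1.2} in the form \eqref{e1.3}, the nonlinear part $g(t,x)=O(x^2)$ contributes only terms of strictly higher homogeneous degree to the defining identity $\p_tF_i+\langle\p_xF_i,f\rangle=0$, so extracting the lowest order homogeneous part yields $\p_tR_i+\langle\p_xR_i,A(t)x\rangle=0$; that is, each $R_i$ is a rational homogeneous first integral of the variational equation \eqref{e1.4}. Let $\Phi(t)$ be its fundamental matrix with $\Phi(0)=\mathbf E$, so that the monodromy operator is $\mathcal P=\Phi(2\pi)$ with eigenvalues $\mu$. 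Being a first integral, $R_i(t,\Phi(t)x_0)\equiv R_i(0,x_0)$; using that the coefficients of $R_i$ are $2\pi$ periodic, i.e. $R_i(2\pi,\cdot)=R_i(0,\cdot)$, evaluation at $t=2\pi$ gives $R_i(0,\mathcal Px_0)=R_i(0,x_0)$. Hence $\widetilde R_i(x):=R_i(0,x)$ is a rational homogeneous function invariant under the linear map $\mathcal P$, and since $R_i(t_0,\cdot)=\widetilde R_i(\Phi(t_0)^{-1}\,\cdot)$ for each $t_0$, functional independence of the $R_i(t_0,\cdot)$ for a generic $t_0$ passes to the $\widetilde R_i$.

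It then remains to bound the number of functionally independent rational functions invariant under a fixed linear map $\mathcal P$ in terms of its eigenvalues $\mu$. I would first reduce to the semisimple case: by the Jordan--Chevalley decomposition the semisimple part $\mathcal P_s$ lies in the Zariski closure of the cyclic group $\langle\mathcal P\rangle$, so every rational $\mathcal P$--invariant is automatically $\mathcal P_s$--invariant, and $\mathcal P_s$ is diagonalizable with the same eigenvalues $\mu$. After a linear change bringing $\mathcal P_s$ to diagonal form, a monomial $y^{\mathbf k}$ is multiplied by $\mu^{\mathbf k}$, so the invariant Laurent monomials are exactly the $y^{\mathbf k}$ with $\mathbf k\in\Lambda:=\{\mathbf k\in\Z^n:\ \mu^{\mathbf k}=1\}$. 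Standard torus invariant theory then shows that every rational $\mathcal P_s$--invariant is a rational function of a basis $y^{\mathbf k^{(1)}},\dots,y^{\mathbf k^{(r)}}$ of invariant monomials with $r=\operatorname{rank}\Lambda$, so the field of invariants has transcendence degree $r$. Therefore at most $r$ of the $\widetilde R_i$ can be functionally independent, and $r$ is precisely the maximum number of linearly independent vectors of $\Xi$ in $\R^n$, giving $m\le r$. I expect the main obstacle to be this last, invariant--theoretic step: making the passage from the cyclic group to its Zariski closure rigorous and correctly counting the invariants when $\mathcal P$ is not diagonalizable, together with the care needed to guarantee that functional independence survives both the lowest order truncation and the specialization $t\mapsto 0$.
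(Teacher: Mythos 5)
Your first step contains the one genuinely false implication in the argument: from the functional independence of $F_1(t,\cdot),\ldots,F_m(t,\cdot)$ at a generic fixed $t$ you conclude that ``hence so are their lowest order rational homogeneous terms $R_i(t,\cdot)$''. This implication fails already in the autonomous case (take $F_1=x_1$ and $F_2=x_1+x_2^2$: they are functionally independent, but both have lowest order term $x_1$), and it is precisely the failure that Lemma \ref{l3.2} exists to repair: independence of the lowest order terms is available only after replacing the $F_i$ by suitable polynomial combinations $P_i(F_1,\ldots,F_m)$. Moreover, the repair cannot be performed slice-wise at a fixed $t$: the polynomials produced by applying Lemma \ref{l3.2} to the slice functions would depend on $t$, whereas your second step --- extracting $\partial_t R_i+\langle\partial_x R_i,A(t)x\rangle=0$ from the first-integral identity and using $2\pi$-periodicity of $R_i$ in $t$ --- requires the recombined functions to be genuine $t$-periodic first integrals of the full system, with a single coherent lowest order homogeneous part in $x$ valid for all $t$. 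This is exactly why the paper proves the parametrized Lemma \ref{l4.1}, running the $\mu$-decreasing induction of Lemma \ref{l3.2} over the field $\widetilde{\mathbb{C}}(t)$ of generalized rational functions in $t$, so that the recombining polynomials are constant in $t$ and the combinations stay periodic first integrals. You name the right tool, but the mechanism you give for it (genericity in $t$) would not deliver it; with Lemma \ref{l4.1} invoked properly, your $R_i$ exist and the remainder of your argument goes through.

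Once that is fixed, your route is correct but genuinely different from the paper's proof of Theorem \ref{t3}. The paper applies Floquet's theorem to reduce $A(t)$ to a constant matrix $\Lambda$, converts the equations for $\widetilde G_i^0$, $\widetilde H_i^0$ into linear ODEs on coefficient vectors, and imposes $2\pi$-periodicity via the spectra from Lemma \ref{l3.4} to conclude $\mu^{\mathbf l-\mathbf m}=1$ for all monomial pairs; you instead integrate $R_i$ along the fundamental matrix to obtain invariance $\widetilde R_i\circ\mathcal P=\widetilde R_i$ under the monodromy matrix (and your transfer of independence from generic $t_0$ to $t=0$ via $R_i(t,y)=\widetilde R_i(\Phi(t)^{-1}y)$ is sound), then count rational invariants of the single linear map $\mathcal P$: Jordan--Chevalley puts $\mathcal P_s$ in the Zariski closure of $\langle\mathcal P\rangle$, the stabilizer of a rational function is Zariski closed, so invariants of $\mathcal P$ are invariants of $\mathcal P_s$, and the invariant field embeds in $\mathbb{C}\bigl(y^{\mathbf k^{(1)}},\ldots,y^{\mathbf k^{(r)}}\bigr)$ with $r=\operatorname{rank}\Lambda$, whence at most $r$ independent invariants by Lemma \ref{l3.1}. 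This is closer in spirit to the paper's proof of Theorem \ref{t5} (Poincar\'e map plus Lemma \ref{l6.1}) than to its Floquet computation, and it buys a cleaner, coordinate-free replacement for the lower-triangular Jordan-form bookkeeping; what it costs is that the step you compress into ``standard torus invariant theory'' still needs the coprimality argument (getting $G\circ\mathcal P_s=cG$ and $H\circ\mathcal P_s=cH$ for a common constant $c$, exactly as in \eqref{e6.4} and \eqref{e6.5}) spelled out before each monomial can be declared resonant.
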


We remark that Theorem \ref{t3} is an improvement of Theorem $5$ of
\cite{LLZ03} in two ways: one is from analytic (formal) first
integrals to generalized rational first integrals, and second our
result is for more than one first integral.

\smallskip

If there exits a non--zero $\mathbf k\in\mathbb Z^n$ such that
$\mu^{\mathbf k}=1$, we say that $\mu$ is {\it resonant}. The set
$\Xi$ in Theorem \ref{t3} is called the {\it resonant lattice}. For a
$\mathbf k\in\Xi$, we say that $y^{\mathbf k}$ is a {\it resonant
monomial}.

\smallskip

The rest of this paper is dedicated to prove our main results. The
proof of Theorems \ref{t1}, \ref{t4}, \ref{t5} and \ref{t3} is
presented in sections \ref{s3}, \ref{s5}, \ref{s6} and \ref{s4},
respectively.

\section{Proof of Theorem \ref{t1}} \label{s3}

Before proving Theorem \ref{t1} we need some preliminaries, which are
the heart of the proof of Theorem \ref{t1}.

Let $\mathbb C(x)$ be the field of rational functions in the
variables of $x$, and $\mathbb C[x]$ be the ring of polynomials in
$x$. We say that the functions $F_1(x), \ldots, F_k(x)\in \mathbb
C(x)$ are {\it algebraically dependent} if there exists a complex
polynomial $P$ of $k$ variables such that
$P(F_1(x),\ldots,F_k(x))\equiv 0$. For general definition on
algebraical dependence in a more general field, see for instance
\cite[p.152]{Ful}).

\smallskip

The first result provides an equivalent condition on functional
independence. The main idea of the proof follows from that of Ito
\cite[Lemma 9.1]{It89}.

\begin{lemma}\label{l3.1}
The functions $F_1(x), \ldots, F_k(x)\in \mathbb C(x)$ are
algebraically independent if and only if they are functionally
independent.
\end{lemma}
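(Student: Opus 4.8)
The plan is to recast both notions in terms of the Jacobian matrix $J=(\partial F_i/\partial x_j)$ with $1\le i\le k$, $1\le j\le n$, whose entries lie in $\C(x)$. First I would observe that $F_1,\dots,F_k$ are functionally independent precisely when $J$ has rank $k$ as a matrix over the field $\C(x)$: a $k\times k$ minor of $J$ is a rational function, and a nonzero element of $\C(x)$ has a zero-and-pole set of Lebesgue measure zero, so some minor is nonzero on a full measure subset if and only if that minor is nonzero in $\C(x)$; thus $\mathrm{rank}_{\C(x)}J=k$ is equivalent to functional independence. The lemma is therefore the Jacobian criterion: $F_1,\dots,F_k$ are algebraically independent over $\C$ iff $\mathrm{rank}_{\C(x)}J=k$, and I would prove the two contrapositive implications separately.

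For ``algebraically dependent $\Rightarrow$ functionally dependent'', suppose there is a nonzero polynomial $P(y_1,\dots,y_k)$ with $P(F_1,\dots,F_k)\equiv0$, chosen of minimal total degree. Differentiating this identity with respect to each $x_j$ and using the chain rule gives
\[
\sum_{i=1}^{k}\frac{\partial P}{\partial y_i}(F_1,\dots,F_k)\,\frac{\partial F_i}{\partial x_j}=0,\qquad j=1,\dots,n,
\]
so the vector $\bigl((\partial P/\partial y_i)(F)\bigr)_{i}$ lies in the left kernel of $J$. Since $P$ is nonconstant and $\mathrm{char}\,\C=0$, some $\partial P/\partial y_i$ is a nonzero polynomial of strictly smaller degree; by minimality of $\deg P$ it cannot annihilate $(F_1,\dots,F_k)$, so the left-kernel vector is nonzero. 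Hence the rows of $J$ are linearly dependent over $\C(x)$ and $\mathrm{rank}\,J<k$.

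The converse, ``algebraically independent $\Rightarrow$ functionally independent'', is where the real work lies, and I expect it to be the main obstacle, since one must produce linear independence of the differentials $dF_i=\sum_j(\partial F_i/\partial x_j)\,dx_j$ in $\Omega_{\C(x)/\C}$ out of the mere absence of polynomial relations. My plan is to argue through derivations. Extend the algebraically independent set $F_1,\dots,F_k$ to a transcendence basis $F_1,\dots,F_k,G_{k+1},\dots,G_n$ of $\C(x)$ over $\C$; then $\C(x)$ is a finite, hence in characteristic zero separable, algebraic extension of the purely transcendental field $M=\C(F_1,\dots,F_k,G_{k+1},\dots,G_n)$. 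On $M$ the partial derivative with respect to each transcendence generator is a well-defined $\C$-derivation, so there are derivations $D_1,\dots,D_k$ of $M$ with $D_iF_j=\delta_{ij}$; by separability each $D_i$ extends to a $\C$-derivation of $\C(x)$, that is, $D_i=\sum_l h_{il}\,\partial_{x_l}$ with $h_{il}\in\C(x)$. Then the matrix $(D_iF_j)$ equals the product of the coefficient matrix $(h_{il})$ with $J^{\top}$ and has determinant $\det(\delta_{ij})=1\neq0$, which forces $\mathrm{rank}_{\C(x)}J=k$, i.e. functional independence.

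Combining the two displayed implications yields ``algebraically dependent $\iff$ functionally dependent'', hence the stated equivalence. The delicate points to verify are the measure-theoretic reduction of the first paragraph, the minimality argument securing a nonzero left-kernel vector, and, most importantly, the existence and extension of the dual derivations $D_i$; this last step is exactly where the characteristic-zero (separability) hypothesis enters and where my approach substitutes field-theoretic machinery for the more computational route of Ito \cite[Lemma 9.1]{It89}.
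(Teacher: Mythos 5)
Your proposal is correct and follows essentially the same route as the paper: the direction ``algebraically dependent $\Rightarrow$ functionally dependent'' is the paper's minimal-degree chain-rule argument verbatim, and your converse is the same derivation-theoretic argument, differing only cosmetically in that you extend $F_1,\dots,F_k$ to a full transcendence basis and pull the dual derivations through the finite separable extension $\C(x)/M$, whereas the paper first builds $D_r$ with $D_rF_s=\delta_{rs}$ on $\C(F_1,\dots,F_k)$ via Lang's theorem and then extends them across the transcendence-degree-$(n-k)$ extension to $\C(x)$. Both versions rest on exactly the same ingredients --- characteristic-zero separability and the fact that the $\C$-derivations of $\C(x)$ form an $n$-dimensional space spanned by the $\partial/\partial x_j$ --- so the two proofs are interchangeable.
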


\smallskip

\begin{proof}
Sufficiency. By contradiction, if $F_1(x),\ldots,F_k(x)$ are
algebraically dependent, then there exists a complex polynomial
$P(z_1,\ldots,z_k)$ of minimal degree such that
\begin{equation}\label{e*}
P(F_1(x),\ldots,F_k(x))\equiv 0.
\end{equation}
Here minimal degree means that for any polynomial $Q(z_1,\ldots,z_k)$
of degree less than $\mbox{deg}P$ we have that
$Q(F_1(x),\ldots,F_k(x))\not\equiv 0$.

{From} \eqref{e*} it follows that $\partial_{x_j} P(F_1(x),\ldots,
F_k(x))\equiv 0$ for $j=1,\ldots,n$. These are equivalent to
\begin{equation}\label{e3.2}
\displaystyle
\left(\begin{array}{ccc}\displaystyle
\frac{\partial F_1(x)}{\partial x_1} &\displaystyle \ldots &
\displaystyle \frac{\partial F_k(x)}{\partial x_1}\\
\displaystyle\vdots &\displaystyle \ddots &\displaystyle \vdots \\
\displaystyle\frac{\partial F_1(x)}{\partial x_n} &\displaystyle
\ldots &\displaystyle \frac{\partial F_k(x)}{\partial x_n}
\end{array}\right)
\left(\begin{array}{c}
\displaystyle\frac{\partial P}{\partial z_1}(F_1(x),\ldots,F_k(x))\\
\displaystyle\vdots \\
\displaystyle\frac{\partial P}{\partial z_k}(F_1(x),\ldots,F_k(x))
\end{array}\right)\equiv 0.
\end{equation}
Since $P(z_1,\ldots,z_n)$ has the minimal degree and some of the
derivatives $\dfrac{\partial P}{\partial z_l}(z_1,\ldots,z_n)\ne 0$,
it follows that
\[
\displaystyle\frac{\partial P}{\partial z_1}(F_1(x),\ldots,F_k(x))\equiv 0,\,\, \ldots, \,\,
\displaystyle\frac{\partial P}{\partial z_k}(F_1(x),\ldots,F_k(x))\equiv 0,
\]
cannot simultaneously hold. This shows that the rank of the $n\times
k$ matrix in \eqref{e3.2} is less than $k$. Consequently
$F_1(x),\ldots, F_k(x)$ are functionally dependent, this is in
contradiction with the assumption. So we have proved that if
$F_1(x),\ldots,F_n(x)$ are functionally independent, then they are
algebraically independent.

\smallskip

Necessity. For proving this part, we will use the theory of field
extension. For any $v_1,\dots,v_r$ which are elements of some
finitely generated field extension of $\mathbb C$, we denote by
$\mathbb C(v_1,\ldots,v_r)$ the minimal field containing
$v_1,\ldots,v_r$ (for more information on finitely generated field
extensions, see for instance \cite[Chapter one, $\S$\,8]{Ful}).

\smallskip

Recall that for a finitely generated field extension $K$ of the field
$\mathbb C$, denoted by $K/\mathbb C$, the {\it transcendence degree}
of $K$ over $\mathbb C$ is defined to be the smallest integer $m$
such that for some $y_1,\ldots,y_m\in K$, $K$ is algebraic over
$\mathbb C(y_1,\ldots,y_m)$, the field of complex coefficient
rational functions in $y_1,\ldots,y_m$ (see for instance
\cite[p.149]{Ful}). We say that $K$ is {\it algebraic} over $\mathbb
C(y_1,\ldots,y_n)$ if every element, saying $k$, of $K$ is algebraic
over $\mathbb C(y_1,\ldots,y_m)$, i.e. there exists a monic
polynomial $p(X)=X^l+p_1X^{l-1}+\ldots+p_l\in\mathbb
C(y_1,\ldots,y_m)[X]$ such that $p(k)=0$, where by definition $p_j\in
\mathbb C(y_1,\ldots,y_m)$ for $j=1,\ldots,l$ (see for instance
\cite[p.29]{Ful}). The elements $\{y_1,\ldots,y_m\}$ is called a {\it
transcendence base} of $K$ over $\mathbb C$ (see for instance
\cite[p.152]{Ful}). A finitely generated field extension $K$ of
$\mathbb C$ is {\it separably generated} if there is a transcendence
base $\{z_1,\ldots,z_m\}$ of $K$ over $\mathbb C$ such that $K$ is a
separable algebraic extension of $\mathbb C(z_1,\ldots,z_m)$ (see for
instance \cite[p.27]{Ha}). Let $K$ be a field extension of a field
$L$.  The field extension $K/L$ is called {\it algebraic} if $K$ is
algebraic over $L$. An algebraic extension $K/L$ is {\it separable}
if for every $\alpha\in K$, the minimal polynomial of $\alpha$ over
$L$ is separable. A polynomial is {\it separable} over a field $L$ if
all of its irreducible factors have distinct roots in an algebraic
closure of $L$.

\smallskip

Since $F_1,\ldots,F_k$ are algebraically independent, $\mathbb
C(F_1,\ldots,F_k)$ is a separably generated and finitely generated
field extension of $\mathbb C$  of transcendence degree $k$. Here
separabililty follows from the fact that $\mathbb C$ is of
characteristic $0$, and so is $\mathbb C(F_1,\ldots,F_k)$. This last
claim follows from \cite[Theorem 4.8A]{Ha}, which states that if $k$
is an algebraically closed field, then any finitely generated field
extension $K$ of $k$ is separably generated.

\smallskip

{From} the theory of derivations over a field (see for instance
\cite[Chapter X, Theorem 10]{Sl}), there exist $k$ derivations
$D_r(r=1,\ldots,k)$ on $\mathbb
C(F_1,\ldots,F_k)$ satisfying
\begin{equation}\label{e3.3}
D_rF_s=\delta_{rs},
\end{equation}
where $\delta_{rs}=0$ if $r\ne s$, or $\delta_{rs}=1$ if $r=s$.

Since $F_1,\ldots, F_k$ are algebraically independent, it follows
that $\mathbb C(x)$ is a finitely generated field extension of
$\mathbb C(F_1,\ldots,F_k)$ of transcendence degree $n-k$. Hence
there exist $n$ derivations $\widetilde D_1,\ldots,\widetilde D_n$ on
$\mathbb C(x)$ satisfying $\widetilde D_j=D_j$ on $\mathbb
C(F_1,\ldots,F_k)$ for $j=1,\ldots,k$. In addition, all derivations
on $\mathbb C(x)$ form an $n$--dimensional vector space over $\mathbb
C(x)$ with base $\{\frac{\partial }{\partial x_j}:\,\,
j=1,\ldots,n\}$. So we have
\[
\widetilde
D_s=\sum\limits_{j=1}\limits^nd_{sj}\frac{\partial}{\partial x_j},
\]
where $d_{sj}\in \mathbb C(x)$. The derivations $\widetilde D_s$
acting on $\mathbb C(F_1,\ldots,F_k)$ satisfy
\[
\delta_{sr}=D_sF_r=\widetilde
D_sF_r=\sum\limits_{j=1}\limits^nd_{sj}\frac{\partial F_r}{\partial
x_j},\qquad r,s\in\{1,\ldots,k\}.
\]
This shows that the gradients $\nabla_x F_1,\ldots,\nabla_x F_k$ have
the rank $k$, and consequently $F_1,\ldots,F_k$ are functionally
independent.
\end{proof}

\smallskip

We remark that Lemma \ref{l3.1} has a relation in some sense with the
result of Bruns in 1887 (see \cite{Fo}), which stated that if a
polynomial differential system of dimension $n$ has $l\,(1\le l\le
n-1)$ independent algebraic first integrals, then it has $l$
independent rational first integrals. For a short proof of this
result, see Lemma 2.4 of Goriely \cite{Go}.

\smallskip

For an analytic or a polynomial function $F(x)$ in $(\mathbb C^n,0)$,
in what follows we denote by $F^0(x)$ its lowest degree homogeneous
term. For a rational or a generalized rational function
$F(x)=G(x)/H(x)$ in $(\mathbb C^n,0)$, we denote by $F^0(x)$ the
rational function $G^0(x)/H^0(x)$. We expand the analytic functions
$G(x)$ and $H(x)$ as
\[
G^0(x)+\sum\limits_{i=1}\limits^{\infty}G^i(x)\quad \mbox{ and }\quad
H^0(x)+\sum\limits_{i=1}\limits^{\infty}H^i(x),
\]
where $G^i(x)$ and $H^i(x)$ are homogeneous polynomials of degrees
$\deg{G^0(x)}+i$ and $\deg{H^0(x)}+i$, respectively. Then we have
\begin{eqnarray}\label{e3.5}
F(x)=\frac{G(x)}{H(x)}&=&\left(\frac{G^0(x)}{H^0(x)}+\sum\limits_{i=1}
\limits^{\infty}\frac{G^i(x)}{H^0(x)}\right)\left(1+\sum \limits_{i=1}
\limits^{\infty}\frac{H^i(x)}{H^0(x)}\right)^{-1}\nonumber\\
&= & \frac{G^0(x)}{H^0(x)}+\sum\limits_{i=1}\limits^{\infty}\frac{A^i(x)}{B^i(x)},
\end{eqnarray}
where $A^i(x)$ and $B^i(x)$ are homogeneous polynomials. Clearly
\[
\deg{G^0(x)}-\deg{H^0(x)}<\deg {A^i(x)}-\deg{B^i(x)} \qquad \mbox{
for all } i\ge 1.
\]
In what follows we will say that $\deg {A^i(x)}-\deg{B^i(x)}$ is the
degree of $A^i(x)/B^i(x)$, and $G^0(x)/H^0(x)$ is the lowest degree
term of $F(x)$ in the expansion \eqref{e3.5}.  For simplicity we
denote
\[
d(G)=\deg{G^0(x)}, \qquad d(F)=d(G)-d(H)=\deg{G^0(x)}-\deg{H^0(x)},
\]
and call $d(F)$ the {\it lowest degree} of $F$.

\smallskip

\smallskip

\begin{lemma}\label{l3.2} Let
\[
F_1(x)=\frac{G_1(x)}{H_1(x)},\ldots, F_m(x)=\frac{G_m(x)}{H_m(x)},
\]
be functionally independent generalized rational functions in
$(\mathbb C^n,0)$. Then there exist polynomials $P_i(z_1,\ldots,z_m)$
for $i=2,\ldots, m$ such that $F_1(x),\widetilde
F_2(x)=P_2(F_1(x),\ldots,F_m(x)),\ldots,\widetilde
F_m(x)=P_m(F_1(x),\ldots,F_m(x))$ are functionally independent
generalized rational functions, and that $F_1^0(x),\widetilde
F_2^0(x),\ldots,\widetilde F_m^0(x)$ are functionally independent
rational functions.
\end{lemma}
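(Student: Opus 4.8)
The plan is to work throughout with \emph{algebraic} independence rather than functional independence: by Lemma \ref{l3.1} the two notions coincide for elements of $\C(x)$, so it suffices to produce polynomial combinations $\widetilde F_i$ that are algebraically independent and whose lowest order terms $\widetilde F_i^0$ are algebraically independent. Two elementary facts about the lowest degree $d(\cdot)$ from the expansion \eqref{e3.5} drive everything: $d$ behaves like a valuation, $d(FG)=d(F)+d(G)$ and $d(F+G)\ge\min\{d(F),d(G)\}$, and the passage to the lowest order term is multiplicative, $(FG)^0=F^0G^0$, while $(F+G)^0=F^0+G^0$ whenever $d(F)=d(G)$ and this sum is nonzero. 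We may also assume that $F_1^0$ is non-constant, since a constant can never belong to an algebraically independent family; this is the only point at which the distinguished role of $F_1$ enters.

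The heart of the argument is a cancellation mechanism. Suppose the lowest order terms $F_1^0,\dots,F_m^0$ are algebraically dependent. Since each $F_i^0$ is homogeneous of degree $d_i:=d(F_i)$, grouping a nontrivial relation by weighted degree (assigning weight $d_i$ to $z_i$) produces a nonzero \emph{weighted homogeneous} polynomial $Q$ of some weight $w$ with $Q(F_1^0,\dots,F_m^0)\equiv 0$. Because $F_1,\dots,F_m$ are algebraically independent over $\C$, the function $Q(F_1,\dots,F_m)$ is not identically zero; yet every monomial of $Q$ evaluated at the $F_i$ has lowest degree exactly $w$, and by multiplicativity the sum of their lowest order parts equals $Q(F_1^0,\dots,F_m^0)=0$. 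Hence the degree-$w$ parts cancel and $d\big(Q(F_1,\dots,F_m)\big)>w$. Thus applying a leading relation to the genuine functions yields a nonzero generalized rational function of strictly higher order.

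Using this I would run the following replacement procedure. Given such a dependency, the relation $Q$ cannot involve $z_1$ alone (otherwise $F_1^0$ would be algebraic over $\C$, hence constant), so $Q$ genuinely involves some $z_j$ with $j\ge 2$; fix such a $j$, set $\widetilde F_j:=Q(F_1,\dots,F_m)$, and leave the remaining $F_i$ unchanged. The substitution $(z_1,\dots,z_m)\mapsto(z_1,\dots,z_{j-1},Q,z_{j+1},\dots,z_m)$ has Jacobian determinant $\partial Q/\partial z_j\not\equiv 0$, hence is dominant, so the new tuple is again algebraically independent and still contains $F_1$. One then repeats: at each stage the symbols are either already algebraically independent, in which case composing the polynomial substitutions gives the required $P_2,\dots,P_m$, or they are dependent, forcing another replacement that strictly raises the order of the modified function.

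The main obstacle is termination, that is, exhibiting a well-founded quantity that strictly decreases at every replacement. Neither the transcendence degree of the field generated by the current symbols (it need not increase at a single step, since subtracting one power of $F_1$ may merely expose a further dependent leading term) nor the order alone (which only increases) suffices in isolation. I expect the correct measure to be lexicographic, combining the deficiency $m-\mathrm{trdeg}_{\C}\,\C(\widetilde F_1^0,\dots,\widetilde F_m^0)$ with secondary data recording how the orders $d(\widetilde F_i)$ sit inside the value group of $\C(F_1,\dots,F_m)$, so that only finitely many order-raising corrections can occur before the deficiency drops. Proving that this measure is well-founded—equivalently, that the restriction of the order valuation to $\C(F_1,\dots,F_m)$ realizes transcendence degree $m$ on its associated graded, so that a genuine transcendence basis of symbols exists—is the technical crux, and is precisely where the algebraic independence of $F_1,\dots,F_m$ over $\C$ (which forbids an infinite regress of corrections) has to be invoked.
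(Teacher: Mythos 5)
Your overall strategy is the right one and matches the paper's in outline: reduce to algebraic independence via Lemma \ref{l3.1}, observe that substituting the true functions into a relation among the lowest order parts produces a nonzero function of strictly higher order (your cancellation mechanism is exactly the paper's inequality $\nu<d(\widehat F_{k+1})$), and replace one function by this combination while preserving independence. But the step you yourself flag as the ``technical crux''---termination---is precisely the missing content, and it is the heart of the paper's proof. Your proposed lexicographic measure is only conjectured, and your fallback (that the order valuation on $\C(F_1,\ldots,F_m)$ realizes transcendence degree $m$ on the associated graded) is neither proved nor obviously easier than the lemma itself. As written, the proposal does not constitute a proof.

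The paper closes this gap with elementary bookkeeping, one index at a time: assuming $F_1^0,\ldots,F_k^0$ independent and $F_1^0,\ldots,F_{k+1}^0$ dependent, fix a $(k+1)\times(k+1)$ Jacobian minor of $(F_1,\ldots,F_{k+1})$ with nonvanishing determinant $\mathcal D$ and set
\[
\mu(F_1,\ldots,F_{k+1})=d(\mathcal D)+k+1-\sum_{j=1}^{k+1}d(F_j),
\]
with $d(\cdot)$ the lowest degree from \eqref{e3.5}. Since each entry $\partial F_i/\partial x_j$ has lowest degree at least $d(F_i)-1$, $\mu$ is a non-negative integer, and $\mu=0$ exactly when the lowest order parts of the tuple are functionally independent. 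Crucially, the relation $P$ is taken of \emph{minimal} degree: minimality forces $(\partial P/\partial z_{k+1})(F_1^0,\ldots,F_{k+1}^0)\not\equiv 0$, not merely $\partial P/\partial z_{k+1}\not\equiv 0$ as a polynomial, which is all your dominance argument uses. This gives the exact identity $d\bigl((\partial P/\partial z_{k+1})(F_1,\ldots,F_{k+1})\bigr)=\nu-d(F_{k+1})$, and combined with the chain-rule factorization $\widehat{\mathcal D}=\mathcal D\cdot(\partial P/\partial z_{k+1})(F_1,\ldots,F_{k+1})$ it yields $\mu(F_1,\ldots,F_k,\widehat F_{k+1})=\mu(F_1,\ldots,F_{k+1})+\nu-d(\widehat F_{k+1})<\mu(F_1,\ldots,F_{k+1})$, the strict decrease being exactly your cancellation inequality. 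So the procedure terminates after at most $\mu$ steps, with no valuation theory needed. Note that without minimality your step can make $(\partial Q/\partial z_j)(F_1^0,\ldots,F_m^0)$ vanish identically, so the order of the new Jacobian minor jumps by more than $\nu-d(F_j)$ and a measure of this type loses control; the minimal-degree choice is therefore not cosmetic but the ingredient that makes the counting argument work.
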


\smallskip

\begin{proof}
This result was first proved by Ziglin \cite{Zi} in 1983, and then proved by Baider {\it et al} \cite{BCRS} in 1996. In order that this paper is self--contained, we provide a proof here (see also the idea of the proof
of Lemma 2.1 of \cite{It89}). 

\smallskip

If $F_1^0(x),\ldots, F_m^0(x)$ are
functionally independent, the proof is done.

\smallskip

Without loss of generality we assume that $F_1^0(x),\ldots, F_k^0(x)$
($1\le k<m$) are functionally independent, and $F_1^0(x),\ldots,
F_{k+1}^0(x)$ are functionally dependent. By Lemma \ref{l3.1} it
follows that $F_1^0(x),\ldots, F_{k+1}^0(x)$ are algebraically
dependent. So there exists a polynomial $P(z)$ of minimal degree with
$z=(z_1,\ldots, z_{k+1})$  such that
\[
P(F_1^0(x),\ldots, F_{k+1}^0(x))\equiv 0.
\]
The fact that $F_1^0(x),\ldots, F_{k}^0(x)$ are algebraically
independent implies
\[
\frac{\partial P}{\partial z_{k+1}}(z)\not\equiv 0.
\]

\smallskip

Since $F_1(x),\ldots,F_m(x)$ are functionally independent, and so
also $F_1(x),\ldots,$ $F_{k+1}(x)$ are functionally independent.
Hence there exists a $(k+1)\times (k+1)$ minor $M=\frac{\partial
(F_1(x),\ldots,F_{k+1}(x))}{\partial (x_{i_1},\ldots,x_{i_{k+1}})}$
of the matrix $\frac{\partial (F_1(x),\ldots,F_{k+1}(x))}{\partial
(x_1,\ldots,x_n)}$ such that its determinant does not vanish. We
denote by $\mathcal D$ this last determinant, and denote by $\mathcal
D^0$ the determinant of the square matrix $M^0:=\frac{\partial
(F_1^0(x),\ldots,F_{k+1}^0(x))}{\partial
(x_{i_1},\ldots,x_{i_{k+1}})}$. We note that $\mathcal D^0$ is the
lowest degree rational homogeneous term of $\mathcal D$.

\smallskip

Define
\[
\mu(F_1,\ldots,F_{k+1})=d(\mathcal
D)+k+1-\sum\limits_{j=1}\limits^{k+1}d(F_j),
\]
where $d(\mathcal D)$ is well--defined, because $\mathcal D$ is also
a generalized rational function. Since  $\partial F_i/\partial
x_j=\left(H_i\frac{\partial G_i}{\partial x_j}-G_i\frac{\partial
H_i}{\partial x_j}\right)/(H_i)^2$ has the lowest  degree larger than
or equal to $d(F_i)-1$ (the former happens if $H_i^0\frac{\partial
G_i^0}{\partial x_j}-G_i^0\frac{\partial H_i^0}{\partial x_j}\equiv
0$), it follows from the definition of $\mathcal D$ that
\[
\mu(F_1,\ldots,F_{k+1})\ge 0.
\]
Furthermore $\mu(F_1,\ldots,F_{k+1})= 0$ if and only if $d(\mathcal
D^0)=d(\mathcal D)$, because $\mathcal D^0$ is the lowest degree
rational homogeneous part of $\mathcal D$. We note that $d(\mathcal
D^0)=d(\mathcal D)$ if and only if $\det(\mathcal D^0)\ne 0$, and
this is equivalent to the functional independence of $F_1^0(x),
\ldots, F_{k+1}^0(x)$. So by the assumption we have
$\mu(F_1,\ldots,F_{k+1})>0$. Set
\[
\widehat F_{k+1}(x)=P(F_1(x),\ldots, F_{k+1}(x)).
\]

\smallskip

First we claim that the functions $F_1(x),\ldots,F_k(x),\widehat
F_{k+1}(x)$ are functionally independent. Indeed, define
\[
\widehat{\mathcal D}:=\det(\partial (F_1(x),\ldots,F_{k}(x),\widehat
F_{k+1}(x))/\partial (x_{i_1},\ldots,x_{i_k},x_{i_{k+1}}).
\]
Then it follows from the functional independence of $F_1(x),\ldots
F_{k+1}(x)$ that
\begin{eqnarray}\label{e3.4}
\widehat{\mathcal D}&=&\det\left(\frac{\partial (F_1(x),\ldots,F_{k}(x),
\widehat F_{k+1}(x))}{\partial(F_1,\ldots, F_k,F_{k+1})}\,
\frac{\partial(F_1,\ldots, F_k, F_{k+1})}{\partial (x_{i_1},\ldots,x_{i_k},
x_{i_{k+1}})}\right)\nonumber\\
&=&\mathcal D\,\frac{\partial P}{\partial z_{k+1}}(F_1,\ldots,F_{k+1}).
\end{eqnarray}
The first equality is obtained by using the derivative of composite
functions, and the second equality follows from the fact that
\[
\det\left(\frac{\partial (F_1(x),\ldots,F_{k}(x),\widehat
F_{k+1}(x))} {\partial(F_1,\ldots,
F_k,F_{k+1})}\right)=\frac{\partial P}{\partial
z_{k+1}}(F_1,\ldots,F_{k+1}).
\]
Since $(\partial P/\partial z_{k+1})(z_1,\ldots,z_{k+1})\not\equiv 0$
and $P$ has the minimal degree such that $P(F_1^0(x),\ldots,
F_{k+1}^0(x))\equiv 0$, we have $(\partial P/\partial
z_{k+1})(F_1^0(x),\ldots, F_{k+1}^0(x))\not\equiv 0$, and so
$\widehat{\mathcal D}\not\equiv 0$. This proves that
$F_1(x),\ldots,F_{k}(x),\widehat F_{k+1}(x)$ are functionally
independent. The claim follows.

\smallskip

Second we claim that
\[
\mu(F_1,\ldots,F_k,\widehat F_{k+1})<\mu(F_1,\ldots,F_{k+1}).
\]
Indeed, writing the polynomial $P$ as the summation
\[
P(z)=\sum\limits_{\alpha}p_{\alpha}z^\alpha,\quad
z^\alpha=z_1^{\alpha_1}\cdot \ldots\cdot
z_{k+1}^{\alpha_{k+1}},\,\,\,
\alpha=(\alpha_1,\ldots,\alpha_{k+1})\in (\mathbb Z^+)^{k+1}.
\]
Define
\[
\nu=\min\{\langle \alpha,\,(d(F_1),\ldots,d(F_{k+1})\rangle:\,
p_\alpha\ne 0,\,\alpha_{k+1}\ne 0\}.
\]
We have $\nu<d(\widehat F_{k+1})$, because $\widehat
F_{k+1}(x)=P(F_1(x),\ldots,F_{k+1}(x))$ contains $F_{k+1}$ and
$P(F_1^0(x),\ldots,F_{k+1}^0(x))\equiv 0$, so the lowest degree part
in the expansion of $P(z)$ must contain $F_{k+1}(x)$. This implies
the lowest degree of $\widehat F(x)$ is larger than that of $P(z)$.
Moreover we get from \eqref{e3.4} and the definition of $\nu$ that
\[
d(\widehat {\mathcal D})=d(\mathcal D)+d\left(\frac{\partial
P}{\partial z_{k+1}}(F_1,\ldots,F_{k+1})\right) =d(\mathcal
D)+\nu-d(F_{k+1}),
\]
where the last equality holds because the partial derivative of $P$
with respect to $z_{k+1}$ is such that $P$ loses one $F_{k+1}$, and
so the total degree loses the degree of $F_{k+1}$. Hence from the
definition of the quantity $\mu$ it follows that
\begin{eqnarray*}
\mu(F_1,\ldots,F_k,\widehat F_{k+1}) &=& d(\widehat{\mathcal
D})+k+1-\sum\limits_{j=1}\limits^{k}d(F_j)-d(\widehat F_{k+1}) \\
&=& \mu(F_1,\ldots,F_{k+1})+\nu-d(\widehat F_{k+1})\\
&<&\mu(F_1,\ldots,F_{k+1}).
\end{eqnarray*}
This proves the claim.

\smallskip

By the two claims, from the functionally independent generalized
rational functions $F_1(x),\ldots, F_k(x),F_{k+1}(x)$ with
$F_1^0(x),\ldots, F_k^0(x),F_{k+1}^0(x)$ being functionally
dependent, we get functionally independent generalized rational
functions $F_1(x),\ldots,F_k(x),\widehat F_{k+1}(x)$, which satisfy
$\mu(F_1,\ldots,F_k,\widehat F_{k+1})$ $<\mu(F_1,\ldots,F_{k+1})$.

\smallskip

If $\mu(F_1,\ldots,F_k,\widehat F_{k+1})=0$, then $F_1^0(x),\ldots,
F_k^0(x),\widehat F_{k+1}^0(x)$ are functionally independent. The
proof is done.

\smallskip

If $\mu(F_1,\ldots,F_k,\widehat F_{k+1})>0$, then $F_1^0(x),\ldots,
F_k^0(x),\widehat F_{k+1}^0$ are also functionally dependent.
Continuing the above the procedure, finally we can get a polynomial
$\widetilde P(z)$  with $z=(z_1,\dots,z_{k+1})$ such that
\[
F_1(x),\ldots,F_k(x),\widetilde F_{k+1}(x)=\widetilde
P(F_1(x),\ldots,F_{k+1}(x)),
\]
are functionally independent and $\mu(F_1,\ldots,F_k,\widetilde
F_{k+1})=0$. The last equality implies that the rational functions
$F_1^0,\ldots,F_k^0,\widetilde F_{k+1}^0$ are functionally
independent. Furthermore, the generalized rational functions
\[
F_1(x),\ldots,F_k(x),\widetilde F_{k+1}(x), F_{k+2}(x),\ldots,F_m(x),
\]
are functionally independent, because $\widetilde F_{k+1}(x)$
involves only $F_1,\ldots,F_{k+1}$, and $F_1,\ldots,F_k,\widetilde
F_{k+1}$ are functionally independent, and also $F_1,\ldots, F_m$ are
functionally independent. This can also be obtained by direct
calculations as follows
\begin{eqnarray*}
&& \det\left(\frac{\partial (F_1(x),\ldots,F_k(x),\widetilde
F_{k+1}(x), F_{k+2}(x),\ldots,F_m(x))}{\partial
(x_1,\ldots,x_n)}\right)\\ &=&
\det\left(\begin{array}{ccc}\frac{\partial F_1}{\partial x_1} &
\ldots  & \frac{\partial F_1}{\partial x_n}\\
\vdots & \vdots & \vdots \\
\frac{\partial F_k}{\partial x_1} & \ldots  & \frac{\partial
F_k}{\partial x_n}\\ \frac{\partial \widetilde P}{\partial
z_1}\frac{\partial F_1}{\partial x_1}+\ldots+\frac{\partial
\widetilde P}{\partial z_{k+1}}\frac{\partial F_{k+1}}{\partial x_1}
& \ldots  & \frac{\partial \widetilde P}{\partial z_1}\frac{\partial
F_1}{\partial x_n}+\ldots+\frac{\partial \widetilde P}{\partial z_{k+1}}
\frac{\partial F_{k+1}}{\partial x_n}\\
\frac{\partial F_{k+1}}{\partial x_1} & \ldots  & \frac{\partial
F_{k+2}}{\partial x_n}\\
\vdots & \vdots & \vdots \\
\frac{\partial F_m}{\partial x_1} & \ldots  & \frac{\partial
F_m}{\partial x_n}
\end{array}\right)\\
&=&\frac{\partial \widetilde P}{\partial
z_{k+1}}(x)\,\det\left(\frac{\partial (F_1(x),\ldots,
F_m(x))}{\partial (x_1,\ldots,x_n)}\right)\ne 0.
\end{eqnarray*}

\smallskip

If $k+1=m$, the proof is completed. Otherwise we can continue the
above procedure, and finally we get the functionally independent
generalized rational functions $F_1(x),\ldots, F_k(x), \widetilde
F_{k+1}(x)=\widetilde P_{k+1}(F_1(x),\ldots,F_{k+1}(x)),\ldots, $
$\widetilde F_m(x)=\widetilde P_m(F_1(x),\ldots,F_m(x))$ such that
their lowest order rational functions $F_1^0(x), F_k^0(x),\widetilde
F_{k+1}^0(x),\ldots,\widetilde F_m^0(x)$ are functionally
independent, where $\widetilde P_j$ for $j=k+1,\ldots, m$ are
polynomials in $F_1,\ldots,F_j$.

\smallskip

The proof of the lemma is completed.
\end{proof}

\smallskip

The next result characterizes rational first integrals of system
\eqref{e1.1}. A {\it rational monomial} is by definition the ratio of
two monomials, i.e. of the form $x^{\mathbf k}/x^{\mathbf l}$ with
$\mathbf k,\mathbf l\in (\mathbb Z^+)^n$. The rational monomial
$x^{\mathbf k}/x^{\mathbf l}$ is {\it resonant} if $\langle
\lambda,\mathbf k-\mathbf l\rangle=0$. A rational function is {\it
homogeneous} if its denominator and numerator are both homogeneous
polynomials. A rational homogeneous function is {\it resonant} if the
ratio of any two elements in the set of all its monomials in both
denominator and numerator is a resonant rational monomial.

\smallskip

In system \eqref{e1.1} we assume without loss of generality that $A$
is in its Jordan normal form and is a lower triangular matrix. Set
$f(x)=Ax+g(x)$ with $g(x)=O(x^2)$. The vector field associated to
\eqref{e1.1} is written in
\[
\mathcal X=\mathcal X_1+\mathcal X_h:=\langle
Ax,\partial_x\rangle+\langle g(x),\partial_x\rangle.
\]

\smallskip

\begin{lemma}\label{l3.3}
If $F(x)=G(x)/H(x)$ is a generalized rational first integral of the
vector field $\mathcal X$ defined by \eqref{e1.1}, then
$F^0(x)=G^0(x)/H^0(x)$ is a resonant rational homogeneous first
integral of the linear vector field $\mathcal X_1$, where we assume
that $F^0$ is non--constant, otherwise if $F^0(x)\equiv a\in \mathbb
C$, then we consider $(F-a)^0$, which is not a constant.
\end{lemma}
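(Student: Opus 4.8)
The plan is to reformulate ``$F$ is a first integral'' as a polynomial identity, extract information degree by degree to get the conclusion about $\mathcal{X}_1$, and then refine it weight by weight to get resonance.

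First I would dispose of the degenerate case exactly as in the statement: if $F^0\equiv a\in\mathbb{C}$, then $F-a$ is again a generalized rational first integral whose lowest order term $(F-a)^0$ is non-constant, so there is no loss in assuming $F^0$ non-constant. Since $\mathcal{X}$ is a derivation, $\mathcal{X}(F)=\mathcal{X}(G/H)=(\mathcal{X}(G)H-G\,\mathcal{X}(H))/H^2$, so $\mathcal{X}(F)\equiv 0$ is equivalent to the analytic identity $\mathcal{X}(G)\,H=G\,\mathcal{X}(H)$. Now use $\mathcal{X}=\mathcal{X}_1+\mathcal{X}_h$ and observe that $\mathcal{X}_1=\langle Ax,\partial_x\rangle$ sends homogeneous polynomials of degree $d$ to homogeneous polynomials of degree $d$, while $\mathcal{X}_h=\langle g(x),\partial_x\rangle$ with $g=O(x^2)$ raises the degree by at least one. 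Hence the homogeneous component of degree $d(G)$ of $\mathcal{X}(G)$ is $\mathcal{X}_1(G^0)$, that of degree $d(H)$ of $H$ is $H^0$, and neither $\mathcal{X}(G)$ nor $H$ has components of lower degree. Multiplying the lowest components, the degree-$(d(G)+d(H))$ part of each side of $\mathcal{X}(G)H=G\,\mathcal{X}(H)$ gives $\mathcal{X}_1(G^0)H^0=G^0\mathcal{X}_1(H^0)$, that is, $\mathcal{X}_1(F^0)=0$. This already shows $F^0$ is a homogeneous rational first integral of the linear field $\mathcal{X}_1$.

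It remains to prove resonance. Write $A=\Lambda+N$ with $\Lambda=\mathrm{diag}(\lambda_1,\dots,\lambda_n)$ and $N$ the strictly lower triangular nilpotent part; as $A$ is in Jordan form these commute, and correspondingly $\mathcal{X}_1=\mathcal{X}_S+\mathcal{X}_N$ with $\mathcal{X}_S=\langle\Lambda x,\partial_x\rangle$ and $\mathcal{X}_N=\langle Nx,\partial_x\rangle$ two commuting derivations, $\mathcal{X}_S$ acting diagonally by $\mathcal{X}_S(x^{\mathbf{k}})=\langle\lambda,\mathbf{k}\rangle\,x^{\mathbf{k}}$. Thus resonance of $F^0$ is exactly the assertion that, in a suitable representation, all numerator and denominator monomials lie in one $\mathcal{X}_S$-eigenspace. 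The crucial step, which I expect to be the main obstacle, is to upgrade $\mathcal{X}_1(F^0)=0$ to $\mathcal{X}_S(F^0)=0$, i.e. to show the nilpotent part cannot compensate the semisimple one. I would argue from the flow: $F^0$ is invariant under $e^{tA}=e^{t\Lambda}e^{tN}$ for all $t$, and $\{P\in GL_n(\mathbb{C}):F^0\circ P=F^0\}$ is Zariski closed; since the Zariski closure of the one-parameter group $\{e^{tA}\}$ contains the semisimple and unipotent parts $e^{t\Lambda}$, $e^{tN}$ of each of its elements, $F^0$ is invariant under $e^{t\Lambda}$, whence $\mathcal{X}_S(F^0)=0$. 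One can also proceed concretely: $\mathcal{X}_N$ strictly lowers the weight $\sum_i i\,k_i$ on monomials and so admits no nonzero eigenvalue on homogeneous rational functions, which lets one match the extreme $\mathcal{X}_S$-weights of $G^0$ and $H^0$.

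Finally, granting $\mathcal{X}_S(F^0)=0$, I would expand the invariance $F^0(e^{t\Lambda}x)=F^0(x)$. Writing $G^0(e^{t\Lambda}x)=\sum_\sigma e^{\sigma t}G^0_\sigma(x)$ and $H^0(e^{t\Lambda}x)=\sum_\tau e^{\tau t}H^0_\tau(x)$, where $G^0_\sigma$ (resp. $H^0_\tau$) collects the monomials of $G^0$ (resp. $H^0$) with $\langle\lambda,\mathbf{k}\rangle=\sigma$ (resp. $=\tau$), the identity $G^0(e^{t\Lambda}x)H^0(x)=G^0(x)H^0(e^{t\Lambda}x)$ together with the linear independence of the exponentials $\{e^{\rho t}\}_\rho$ over $\mathbb{C}(x)$ yields $G^0_\rho H^0=G^0 H^0_\rho$ for every weight $\rho$. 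Choosing $\rho_0$ with $G^0_{\rho_0}\neq 0$ forces $H^0_{\rho_0}\neq 0$ and $F^0=G^0/H^0=G^0_{\rho_0}/H^0_{\rho_0}$, a representation all of whose monomials $x^{\mathbf{k}}$ satisfy $\langle\lambda,\mathbf{k}\rangle=\rho_0$. Hence every ratio $x^{\mathbf{k}}/x^{\mathbf{l}}$ of two such monomials obeys $\langle\lambda,\mathbf{k}-\mathbf{l}\rangle=0$ and is resonant, so $F^0$ is a resonant rational homogeneous first integral of $\mathcal{X}_1$, completing the proof.
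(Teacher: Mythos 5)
Your proof is correct, but its resonance half takes a genuinely different route from the paper's. Your first step coincides with the paper's: equating the lowest--degree homogeneous component of $\mathcal X(G)H=G\,\mathcal X(H)$ gives $\mathcal X_1(G^0)H^0=G^0\mathcal X_1(H^0)$, i.e.\ \eqref{e3.6}. For resonance, the paper assumes $G^0,H^0$ coprime, extracts a single constant $c$ with $\langle\partial_xG^0,Ax\rangle=cG^0$ and $\langle\partial_xH^0,Ax\rangle=cH^0$, and then applies the spectral Lemma \ref{l3.4} for the operator $L_c$, together with the $L_c$--invariance of the splitting $\mathcal H_n^l=\mathcal H_{n1}^l\oplus\mathcal H_{n2}^l$ (which uses that $A$ is lower--triangular Jordan), to kill all non--resonant monomials of $G^0$ and $H^0$. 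You instead work at the group level: the stabilizer $\{P\in GL_n(\mathbb C):F^0\circ P=F^0\}$ is a Zariski--closed subgroup containing the flow $\{e^{tA}\}$, and since algebraic groups are closed under the multiplicative Jordan decomposition (Chevalley), it contains $e^{t\Lambda}$; then linear independence of the exponentials $e^{\rho t}$ in the identity $G^0(e^{t\Lambda}x)H^0(x)=G^0(x)H^0(e^{t\Lambda}x)$ concentrates the weights. This is shorter and bypasses Lemma \ref{l3.4} entirely, at the cost of importing an algebraic--group fact; the paper's operator--theoretic argument is more elementary and, structurally, is recycled almost verbatim for the Poincar\'e map and the periodic case (Lemma \ref{l6.1} in the proofs of Theorems \ref{t5} and \ref{t3}), whereas your flow argument would need the discrete analogue there. (Your alternative ``concrete'' route via the weight--lowering property of $\mathcal X_N$ is only sketched, but the Zariski argument stands on its own.) One small point to make explicit: your conclusion is that the representation $G^0_{\rho_0}/H^0_{\rho_0}$ is resonant, while the lemma, after the paper's reduction, asserts this for the coprime representation $G^0/H^0$; this is easily repaired, since if $G^0,H^0$ are coprime your identity $G^0_\rho H^0=G^0H^0_\rho$ forces $G^0\mid G^0_\rho$, hence $G^0_\rho=c_\rho G^0$ by homogeneity, and disjointness of the supports of the $G^0_\sigma$ then gives $G^0=G^0_{\rho_0}$ and likewise $H^0=H^0_{\rho_0}$, so the two formulations agree.
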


\smallskip

For proving this last lemma we will use the following result (see
Lemma 1.1 of \cite{Bi}, for a different proof see for example
\cite{LLZ03}).

\begin{lemma}\label{l3.4}
Let $\mathcal H_n^m$ be the linear space of complex coefficient
homogeneous polynomials of degree $m$ in $n$ variables. For any
constant $c\in\mathbb C$, define a linear operator on $\mathcal
H_n^m$ by
\[
L_c(h)(x)=\langle\partial_xh(x),Ax\rangle-c\,h(x), \qquad h(x)\in
\mathcal H_n^m.
\]
Then the spectrum of $L_c$ is
\[
\{\langle \mathbf k,\lambda\rangle-c:\,\,\mathbf k\in(\mathbb
Z^+)^n,\, |\mathbf k|=k_1+\ldots+k_n=m\},
\]
where $\lambda$ are the eigenvalues of $A$.
\end{lemma}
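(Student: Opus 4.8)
The plan is to reduce immediately to the case $c=0$ and then exploit the lower--triangular Jordan structure of $A$. Since $\mathcal H_n^m$ is finite dimensional and $L_c=L_0-c\,\mathrm{Id}$, where $L_0(h)(x)=\langle\partial_xh(x),Ax\rangle$, the spectrum of $L_c$ is obtained from that of $L_0$ by subtracting $c$ from every eigenvalue. Hence it suffices to show that the spectrum of $L_0$ is exactly $\{\langle\mathbf k,\lambda\rangle:\,\mathbf k\in(\mathbb Z^+)^n,\ |\mathbf k|=m\}$. I would work throughout in the monomial basis $\{x^{\mathbf k}:\,|\mathbf k|=m\}$ of $\mathcal H_n^m$, and recall from the setup preceding Lemma \ref{l3.3} that $A$ is already in lower--triangular Jordan normal form, so that $(Ax)_i=\lambda_i x_i+\epsilon_i x_{i-1}$ with $\epsilon_i\in\{0,1\}$ and $\epsilon_1=0$.

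The next step is to compute the action of $L_0$ on a single monomial. Writing $\mathbf e_i$ for the $i$--th standard unit multi--index and differentiating directly, one obtains
\[
L_0(x^{\mathbf k})=\sum_{i=1}^n k_i\,x^{\mathbf k-\mathbf e_i}(\lambda_i x_i+\epsilon_i x_{i-1})=\langle\mathbf k,\lambda\rangle\,x^{\mathbf k}+\sum_{i=2}^n k_i\epsilon_i\,x^{\mathbf k-\mathbf e_i+\mathbf e_{i-1}}.
\]
Thus, in the monomial basis, the matrix of $L_0$ has diagonal entries $\langle\mathbf k,\lambda\rangle$, while every off--diagonal contribution sends $x^{\mathbf k}$ to a monomial $x^{\mathbf k'}$ with $\mathbf k'=\mathbf k-\mathbf e_i+\mathbf e_{i-1}$ for some $i$ with $\epsilon_i=1$.

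The key step is to order the basis so that these off--diagonal terms are strictly triangular, and for this I would introduce the weight functional $w(\mathbf k)=\sum_{i=1}^n i\,k_i$ on the set of multi--indices with $|\mathbf k|=m$. Each off--diagonal target satisfies
\[
w(\mathbf k')=w(\mathbf k-\mathbf e_i+\mathbf e_{i-1})=w(\mathbf k)-i+(i-1)=w(\mathbf k)-1<w(\mathbf k).
\]
Therefore, listing the monomials in order of decreasing $w$, the matrix of $L_0$ becomes triangular, with diagonal blocks (corresponding to fixed values of $w$) being diagonal matrices whose entries are the numbers $\langle\mathbf k,\lambda\rangle$. Since the eigenvalues of a triangular matrix are precisely its diagonal entries, the spectrum of $L_0$ is $\{\langle\mathbf k,\lambda\rangle:\,|\mathbf k|=m\}$, and subtracting $c$ yields the spectrum of $L_c$ stated in the lemma.

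The only genuine obstacle is the non--diagonalizable case. If $A$ were diagonal, the monomials $x^{\mathbf k}$ would be honest eigenvectors of $L_0$ with eigenvalues $\langle\mathbf k,\lambda\rangle$ and the conclusion would be immediate; it is the Jordan subdiagonal entries $\epsilon_i=1$ that create the perturbation terms $x^{\mathbf k-\mathbf e_i+\mathbf e_{i-1}}$. The weight functional $w$ is exactly what resolves this difficulty: it provides a grading of $\mathcal H_n^m$ that the perturbation strictly lowers, which guarantees triangularity and hence pins down the spectrum independently of the particular Jordan block structure of $A$.
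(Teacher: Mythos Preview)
Your argument is correct. The reduction to $c=0$, the monomial computation, and the use of the weight $w(\mathbf k)=\sum_i i\,k_i$ to force triangularity all go through exactly as you describe; the observation that the off--diagonal terms strictly decrease $w$ (so that the diagonal blocks at each $w$--level are genuinely diagonal) is the clean point that makes the spectrum immediate.

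As for comparison with the paper: there is nothing to compare. The paper does not prove Lemma~\ref{l3.4} at all; it simply quotes the statement and refers the reader to Lemma~1.1 of Bibikov~\cite{Bi} and to \cite{LLZ03} for proofs. Your write--up therefore supplies a self--contained argument that the paper itself omits, and it relies only on the standing hypothesis (stated just before Lemma~\ref{l3.3}) that $A$ is already in lower--triangular Jordan normal form. If you want to match the level of generality of the cited references, you could remark that for arbitrary $A$ one first conjugates to Jordan form; since conjugation by an invertible linear change of variables induces a similarity on $\mathcal H_n^m$, the spectrum of $L_0$ is unaffected.
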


\smallskip

\noindent{\it Proof of Lemma \ref{l3.3}}.  As in \eqref{e3.5} we
write $F(x)$ in
\[
F(x)=F^0(x)+\sum\limits_{i=1}\limits^\infty F^i(x),
\]
where $F^0(x)$ is the lowest order rational homogeneous function and
$F^i(x)$ for $i\in \mathbb N$ are rational homogeneous functions of
order larger than $F^0(x)$. That $F(x)$ is a first integral in a
neighborhood of $0\in \mathbb C^n$ is equivalent to
\[
\langle \partial_x F(x),f(x)\rangle\equiv 0,\qquad x\in(\mathbb
C^n,0).
\]
Equating the lowest order rational homogeneous functions gives
\begin{equation}\label{e3.6}
\langle \partial_x F^0(x),Ax\rangle\equiv 0,\qquad i.e.
\quad\left\langle \partial_x
\left(\frac{G^0(x)}{H^0(x)}\right),Ax\right\rangle\equiv 0.
\end{equation}
This shows that $F^0(x)$ is a rational homogeneous first integral of
the linear system associated with \eqref{e1.1}.

\smallskip

Next we shall prove that $F^0(x)$ is resonant. {From} the equality
\eqref{e3.6} we can assume without loss of generality that $G^0(x)$
and $H^0(x)$ are relative prime. Now equation \eqref{e3.6} can be
written as
\[
H^0(x)\left\langle
\partial_xG^0(x),Ax\right\rangle-G^0(x)\left\langle
\partial_xH^0(x),Ax\right\rangle\equiv 0.
\]
So there exists a constant $c$ such that
\[
\left\langle \partial_xG^0(x),Ax\right\rangle-cG^0(x)\equiv 0,\quad
\left\langle \partial_xH^0(x),Ax\right\rangle-cH^0(x)\equiv 0.
\]
Set $\deg G^0(x)=l$, $\deg H^0(x)=m$ and $L_c$ be the linear operator
defined in Lemma \ref{l3.4}. Recall from Lemma \ref{l3.4} that $L_c$
has respectively the spectrums on $\mathcal H_n^l$
\[
\mathcal S_l:=\{\langle
\textbf{l},\lambda\rangle-c:\,\,\textbf{l}\in(\mathbb Z^+)^n,\,
|\textbf{l}|=l\},
\]
and on $\mathcal H_n^m$
\[
\mathcal S_m:=\{\langle
\textbf{m},\lambda\rangle-c:\,\,\textbf{m}\in(\mathbb Z^+)^n,\,
|\textbf{m}|=m\}.
\]

\smallskip

Separate $\mathcal H_n^l=\mathcal H_{n1}^l+\mathcal H_{n2}^l$ in such
a way that for any $p(x)\in \mathcal H_{n1}^l $ its monomial
$x^{\mathbf l}$ satisfies $\langle \textbf{l},\lambda\rangle-c=0$,
and for any $q(x)\in \mathcal H_{n2}^l $ its monomial $x^{\mathbf l}$
satisfies $\langle \textbf{l},\lambda\rangle-c\ne 0$. Separate
$G^0(x)$ in two parts $G^0(x)=G_1^0(x)+G_2^0(x)$ with
$G_1^0\in\mathcal H_{n1}^l$ and $G_2^0\in\mathcal H_{n2}^l$. Since
$A$ is in its Jordan normal form and is lower triangular, it follows that
\[
L_c\mathcal H_{n1}^l\subset \mathcal H_{n1}^l, \quad \mbox{and} \quad
L_c\mathcal H_{n2}^l\subset \mathcal H_{n2}^l.
\]
Hence $L_cG^0(x)\equiv 0$ is equivalent to
\[
L_cG_1^0(x)\equiv 0\quad \mbox{and } \quad L_cG_2^0(x)\equiv 0.
\]
Since $L_c$ has the spectrum without zero element on $\mathcal
H_{n2}^l$ and so it is invertible $\mathcal H_{n2}^l$, the equation
$L_cG_2^0(x)\equiv 0$ has only the trivial solution, i.e.
$G_2^0(x)\equiv 0$. This proves that $G^0(x)=G_1^0(x)$, i.e. each
monomial, say  $x^{\mathbf l}$, of $G^0(x)$ satisfies $\langle
{\mathbf l},\lambda\rangle-c=0$.

\smallskip

Similarly we can prove that each monomial, say  $x^{\mathbf m}$, of
$H^0(x)$ satisfies $\langle {\mathbf m},\lambda\rangle-c=0$. This
implies that $\langle {\mathbf l-\mathbf m},\lambda\rangle=0$. The
above proofs show that $F^0(x)=G^0(x)/H^0(x)$ is a resonant rational
homogeneous first integral of $\mathcal X_1$. \hspace{100mm}  $\Box$

\smallskip

Having the above lemmas we can prove Theorem \ref{t1}.

\begin{proof}[Proof of Theorem \ref{t1}]
Let
\[
F_1(x)=\frac{G_1(x)}{H_1(x)},\ldots,F_m(x)=\frac{G_m(x)}{H_m(x)},
\]
be the $m$ functionally independent generalized rational first
integrals of $\mathcal X$. Since the polynomial functions of $F_i(x)$
for $i=1,\ldots,m$ are also generalized rational first integrals of
$\mathcal X$, so by Lemma \ref{l3.2} we can assume without loss of
generality that
\[
F_1^0(x)=\frac{G_1^0(x)}{H_1^0(x)},\ldots,F_m^0(x)=\frac{G_m^0(x)}{H_m^0(x)},
\]
are functionally independent.

\smallskip

Lemma \ref{l3.3} shows that $F_1^0(x),\ldots,F_m^0(x)$ are resonant
rational homogeneous first integrals of the linear vector field
$\mathcal X_1$, that is, these first integrals are rational functions
in the variables given by resonant rational monomials. According to
the linear algebra (see for instance \cite{Bo}), the square matrix
$A$ in $\mathbb C$ has a unique representation in the form
$A=A_s+A_n$ with $A_s$ semi--simple and $A_n$ nilpotent and
$A_sA_n=A_nA_s$. The semi--simple matrix $A_s$ is similar to a
diagonal matrix. Without loss of generality we assume that $A_s$ is
diagonal, i.e. $A_s=\mbox{diag}(\lambda_1,\ldots,\lambda_n)$. Define
$\mathcal X_s=\langle A_sx,\partial_x\rangle$ and $\mathcal
X_n=\langle A_nx,\partial_x\rangle$. Separate $\mathcal X_1=\mathcal
X_s+\mathcal X_n$. Direct calculations show that any resonant
rational monomial is a first integral of $\mathcal X_s$ (for example,
let $x^m$ be a resonant rational monomial, i.e. it satisfies $\langle
\lambda,m\rangle=0$. Then $\mathcal X_s(x^m)=\langle \lambda,m\rangle
x^m=0$). So $F_1^0(x),\ldots,F_m^0(x)$ are also first integrals of
$X_s$. This means that $m$ is less than or equal to the number of
functionally independent resonant rational monomials. In addition, we
can show that the number of functionally independent resonant
rational monomials is equal to the maximum number of linearly independent
vectors in $\mathbb R^n$ of the set $\{\mathbf k\in \mathbb Z^n:\,
\langle \mathbf k,\lambda\rangle=0\}$.

\smallskip

This completes the proof of the theorem. \end{proof}

\section{Proof of Theorem \ref{t4}} \label{s5}

We only consider system \eqref{e1.1} to be positively
semi--quasi--homogeneous. The negative case can be studied similarly,
and its details are omitted. An analytic function $w(x)$ is {\it
semi--quasi--homogeneous of degree $k$} with weight exponent $\mathbf
s$ if $w(x)=w_k(x)+w_h(x)$, where $w_k$ is quasi--homogeneous of
degree $k$ and $w_h$ is the sum of quasi--homogeneous polynomials of
degree larger than $k$.  A rational function $G(x)/H(x)$ is {\it
rational quasi--homogeneous} with weight exponent $\mathbf s$ if
$G(x)$ and $H(x)$ are both quasi--homogeneous with weight exponent
$\mathbf s$. In this section, for an analytic function $w(x)$ we
denote by $w^{(q)}(x)$ its lowest degree quasi--homogeneous part. For
a generalized rational function $F(x)=G(x)/H(x)$ we denote by
$F^{(q)}(x)$ the rational quasi--homogeneous function
$G^{(q)}(x)/H^{(q)}(x)$.

\smallskip

The following result is the key point for proving Theorem \ref{t4},
which is a generalization of Lemma \ref{l3.2} to rational
quasi--homogenous functions. Its proof can be obtained in the same
way as that of Lemma \ref{l3.2}, where we replace the usual degree by
the weight degree. The details are omitted.

\begin{lemma}\label{l5.1}
Let
\[
F_1(x)=\frac{G_1(x)}{H_1(x)},\ldots,F_m(x)=\frac{G_m(x)}{H_m(x)},
\]
be functionally independent generalized rational functions in
$(\mathbb C^n,0)$ with $G_i$ and $H_i$ semi--quasi--homogeneous for
$i=1,\ldots,m$. Then there exist polynomials $P_i(z_1,\ldots,z_m)$
for $i=2,\ldots,m$ such that $F_1(x),\widetilde
F_2(x)=P_2(F_1(x),\ldots,$  $F_m(x)), \ldots,\widetilde
F_m(x)=P_m(F_1(x),\ldots,F_m(x))$ are functionally independent
generalized rational functions, and that $F_1^{(q)}(x),\widetilde
F_2^{(q)}(x),\ldots, \widetilde F_m^{(q)}(x)$ are functionally
independent rational quasi--homogeneous functions.
\end{lemma}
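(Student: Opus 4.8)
The plan is to re-run the descent argument from the proof of Lemma \ref{l3.2} verbatim, with the single change that the ordinary degree grading is replaced by the $\mathbb{Z}$--grading by weight degree: a monomial $x^{\mathbf{k}}$ is assigned the integer $\langle \mathbf{s}, \mathbf{k}\rangle$, so that this grading is additive under products and $\partial_{x_j}$ lowers it by $s_j$. For a semi--quasi--homogeneous $G$ I let $d(G)$ be its lowest weight degree, and for a generalized rational $F=G/H$ with $G,H$ semi--quasi--homogeneous I set $d(F)=d(G)-d(H)$, so that the lowest quasi--homogeneous term $F^{(q)}=G^{(q)}/H^{(q)}$ carries weight degree $d(F)$. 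Since Lemma \ref{l3.1} is a statement about rational functions that does not refer to any grading, it applies verbatim to the functions $F_i^{(q)}$, and in particular functional and algebraic independence of lowest quasi--homogeneous parts coincide.

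After relabelling, I would reduce to the case in which $F_1^{(q)},\ldots,F_k^{(q)}$ are functionally independent while $F_1^{(q)},\ldots,F_{k+1}^{(q)}$ are functionally, hence algebraically, dependent, and choose a polynomial $P(z_1,\ldots,z_{k+1})$ of minimal degree with $P(F_1^{(q)},\ldots,F_{k+1}^{(q)})\equiv 0$; minimality forces $\partial P/\partial z_{k+1}\not\equiv 0$. Picking a nonvanishing $(k+1)\times(k+1)$ Jacobian minor $\mathcal{D}=\partial(F_1,\ldots,F_{k+1})/\partial(x_{i_1},\ldots,x_{i_{k+1}})$, I would define the descent quantity
\[
\mu(F_1,\ldots,F_{k+1})=d(\mathcal{D})+\sum_{j=1}^{k+1}s_{i_j}-\sum_{j=1}^{k+1}d(F_j),
\]
so that the constant $k+1$ of the homogeneous case is replaced by $\sum_{j=1}^{k+1}s_{i_j}$, reflecting that differentiating in $x_{i_j}$ now drops the weight degree by $s_{i_j}$ rather than by $1$.

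The core estimates then carry over. Since each entry $\partial F_i/\partial x_{i_j}$ has lowest weight degree at least $d(F_i)-s_{i_j}$, every Leibniz term of $\mathcal{D}$ uses one factor from each selected column and so has lowest weight degree at least $\sum_j d(F_j)-\sum_j s_{i_j}$; this gives $\mu\ge 0$, with equality precisely when the determinant $\mathcal{D}^{(q)}$ of the Jacobian $\partial(F_1^{(q)},\ldots,F_{k+1}^{(q)})/\partial(x_{i_1},\ldots,x_{i_{k+1}})$ of the lowest parts does not vanish identically, equivalently when $F_1^{(q)},\ldots,F_{k+1}^{(q)}$ are functionally independent. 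Setting $\widehat F_{k+1}=P(F_1,\ldots,F_{k+1})$, the composite--derivative identity $\widehat{\mathcal{D}}=\mathcal{D}\,\partial P/\partial z_{k+1}(F_1,\ldots,F_{k+1})$ of \eqref{e3.4} holds unchanged, and $(\partial P/\partial z_{k+1})(F_1^{(q)},\ldots,F_{k+1}^{(q)})\not\equiv 0$ yields $\widehat{\mathcal{D}}\not\equiv 0$, so $F_1,\ldots,F_k,\widehat F_{k+1}$ remain functionally independent. With $\nu$ the minimal value of $\langle\alpha,(d(F_1),\ldots,d(F_{k+1}))\rangle$ over the monomials $z^\alpha$ of $P$ having $\alpha_{k+1}\neq 0$, the weight--degree bookkeeping gives $d(\widehat{\mathcal{D}})=d(\mathcal{D})+\nu-d(F_{k+1})$ and $\nu<d(\widehat F_{k+1})$, whence $\mu(F_1,\ldots,F_k,\widehat F_{k+1})=\mu(F_1,\ldots,F_{k+1})+\nu-d(\widehat F_{k+1})<\mu(F_1,\ldots,F_{k+1})$.

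Since $\mu$ is a nonnegative integer that strictly decreases at each replacement, after finitely many steps I reach $\mu=0$, i.e. functionally independent lowest quasi--homogeneous parts, and the final Jacobian computation of Lemma \ref{l3.2} shows the full tuple stays functionally independent. The point I expect to require the most care is formal rather than conceptual: one must check that the weight--degree grading retains the two properties used throughout — additivity under products, and the drop of weight degree by $s_j$ under $\partial_{x_j}$ — even when the exponents $s_j$ are negative (both facts are insensitive to the sign of $s_j$), and one must confirm that the semi--quasi--homogeneity hypothesis on each $G_i,H_i$ makes $d(\cdot)$ a well--defined, bounded--below integer and that this class of ratios is preserved under the polynomial operation $\widehat F_{k+1}=P(F_1,\ldots,F_{k+1})$, so that $\mu$ is a legitimate descent quantity.
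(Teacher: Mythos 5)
Your proposal is correct and takes exactly the route the paper does: the paper's entire proof of Lemma \ref{l5.1} consists of the remark that the argument of Lemma \ref{l3.2} carries over with the usual degree replaced by the weight degree, all details omitted. Your adjustments --- replacing the constant $k+1$ in $\mu$ by $\sum_{j=1}^{k+1}s_{i_j}$ to account for $\partial_{x_{i_j}}$ lowering the weight degree by $s_{i_j}$, and checking that additivity and the derivative rule survive negative exponents $s_j$ --- are precisely the bookkeeping the paper leaves implicit.
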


\smallskip

Now we shall prove Theorem \ref{t4}. Since system \eqref{e1.1} is
semi--quasi--homogeneous of degree $q>1$, we take the change of
variables
\[
x\rightarrow \rho^{\mathbf S}x, \quad t\rightarrow \rho^{-(q-1)}t,
\]
where $\rho^{\mathbf
S}=\mbox{diag}\left(\rho^{s_1},\ldots,\rho^{s_n}\right)$. System
\eqref{e1.1} is transformed into
\begin{equation}\label{e5.1}
\dot x=f_q(x)+\widetilde f_h(x,\rho),
\end{equation}
where $\widetilde f_h(x,\rho)=\sum\limits_{i\ge 1}\rho^i\widetilde
f_{q+i}(x)$ and $\rho^{\mathbf E-\mathbf S}\widetilde f_{q+i}(x)$ is
quasi--homogeneous of weight degree $q+i$.

\smallskip

If $F(x)=G(x)/H(x)$ is a generalized rational first integral of
system \eqref{e1.1} with $G(x)$ and $H(x)$ semi--quasi--homogeneous
of weight degree $l$ and $m$ with weight exponent $\mathbf s$
respectively, then
\[
F(x,\rho):=\frac{\rho^m G(\rho^{\mathbf S}x)}{\rho^l H(\rho^{\mathbf
S}x)}=F^{(q)}(x)+\ldots,
\]
is a generalized rational first integral of the
semi--quasi--homogeneous system \eqref{e5.1}, where the dots denote
the sum of the higher order rational quasi--homogeneous functions.
Some easy calculations show that $F^{(q)}(x)$ is a rational
quasi--homogeneous first integral of the quasi--homogeneous system
\begin{equation}\label{e5.2}
\dot x=f_q(x).
\end{equation}

\smallskip

Let $c_0$ be a balance. Taking the change of variable $x=t^{-\mathbf
W}(c_0+u)$, then
\[
F^{(q)}(x)=\frac{t^{-\frac{l}{q-1}}
G^{(q)}(c_0+u)}{t^{-\frac{m}{q-1}}
H^{(q)}(c_0+u)}=u_0^{l-m}F^{(q)}(c_0+u),
\]
where $u_0=t^{-1/(q-1)}$ will be chosen as a new auxiliary variable.
Define $F^{q0}(u_0,u)=u_0^{l-m}F^{(q)}(c_0+u)$. System \eqref{e5.2}
is transformed into
\begin{equation}\label{e5.3}
u'=Ku+\overline f_q(u),
\end{equation}
where the prime denotes the derivative with respect to $\tau=\ln t$
and
\[
\overline f_q(u)=\mathbf Wc_0+f_q(c_0+u)-\partial_x f_q(c_0) u.
\]

\smallskip

We claim that $F^{q0}(u_0,u)$ is a first integral of
\begin{equation}\label{e5.4}
u_0'=-\frac{1}{q-1}u_0,\quad u'=Ku+\overline f_q(u).
\end{equation}
Indeed, since
$\left\langle\partial_xF^{(q)}(x),f_q(x)\right\rangle\equiv 0$ and
$\mathbf W=\mathbf S/(q-1)$ we have
\begin{eqnarray*}
&&\left.\frac{F^{q0}(u_0,u)}{d\tau}\right |_{\eqref{e5.4}}\\
&&=-\frac{l-m}{q-1}u_0^{l-m}F^{(q)}(c_0+u)+u_0^{l-m}
\left\langle\partial_uF^{(q)}(c_0+u),Ku+\overline f_q(u)\right\rangle\\
&&=-\frac{u_0^{l-m}}{q-1}\left((l-m)F^{(q)}(c_0+u)-\langle
\partial_uF^{(q)}(c_0+u),\mathbf
S(c_0+u)\rangle\right)\\ &&=0,
\end{eqnarray*}
where we have used the facts that
$F^{(q)}(c_0+u)=G^{(q)}(c_0+u)/H^{(q)}(c_0+u)$, and the generalized
Euler's formula:
\[
\left\langle\partial_xG^{(q)}(x),\mathbf S
x\right\rangle=lG^{(q)}(x)\,\, \mbox{ and }\,\,
\left\langle\partial_xH^{(q)}(x),\mathbf S
x\right\rangle=mH^{(q)}(x),
\]
because $G^{(q)}(x)$ and $H^{(q)}(x)$ are quasi--homogeneous
polynomials of weight degree $l$ and $m$ with weight exponents
$\mathbf s$, respectively. The claim follows.

\smallskip

Assume that system \eqref{e1.1} has the maximal number, say $r$, of
functionally independent generalized rational first integrals
\[
F_1(x)=\frac{G_1(x)}{H_1(x)},\ldots,F_r(x)=\frac{G_r(x)}{H_r(x)}.
\]
By Lemma \ref{l5.1} we can assume that
\[
F_1^{(q)}(x)=\frac{G_1^{(q)}(x)}{H_1^{(q)}(x)},\ldots,F_r^{(q)}
(x)=\frac{G_r^{(q)}(x)}{H_r^{(q)}(x)},
\]
are functionally independent, and they are rational
quasi--homogeneous first integrals of \eqref{e5.2}.

\smallskip

Assume that $G_i^{(q)}(x)$ and $H_i^{(q)}(x)$, $i=1,\ldots,r$, have
weight degree $l_i$ and $m_i$, respectively. Then it follows from the
last claim that
\[
F_1^{q0}(u_0,u)=u_0^{l_1-m_1}F_1^{(q)}(c_0+u), \ldots,
F_r^{q0}(u_0,u)=u_0^{l_r-m_r}F_r^{(q)}(c_0+u),
\]
are functionally independent rational quasi--homogeneous first
integrals of \eqref{e5.4}.

\smallskip

The linear part of \eqref{e5.4} has the eigenvalues
$-1/(q-1),\lambda_{c_0}=(\lambda_1^0,\ldots,\lambda_n^0)$. Since the
Kowalevskaya matrix has always the eigenvalue $-1$ (see for instance
\cite{Zi}), we set $\lambda_1^0=-1$. Then for $\mathbf
k=(k_1,\ldots,k_n)\in\mathbb Z^n$ we have
\[
-k_0+(q-1)\langle\lambda_{c_0},\mathbf k\rangle=\langle
\lambda_{c_0}, (k_0+(q-1)k_1,(q-1)k_2,\ldots,(q-1)k_n)\rangle.
\]
Recall that $d_{c_0}$ is the dimension of the minimum linear subspace
of $\mathbb R^n$ containing the set $\{\mathbf k\in\mathbb Z^n:\,
\langle\lambda_{c_0},\mathbf k\rangle=0\}$. Hence we get from Theorem
\ref{t1} that equation \eqref{e5.4} has at most $d_{c_0}$
functionally independent first integrals. These proofs imply that
$r\le d_{c_0}$, and consequently $r\le d=\min\limits_{c\in\mathcal
B}d_c$.

\smallskip

This completes the proof of the theorem.

\section{Proof of Theorem \ref{t5}}\label{s6}

Assume that system \eqref{e1.1} has the maximal number, say $r$, of
functionally independent generalized rational first integrals in a
neighborhood $U$ of the given periodic orbit, denoted by
\[
F_1(x)=\frac{G_1(x)}{H_1(x)},\ldots, F_r(x)=\frac{G_r(x)}{H_r(x)},
\]
where $G_i$ and $H_i$ are analytic functions. Let $P(x)$ be the
Poincar\'e map defined in a neighborhood of the periodic orbit. Then
$F_i(x)$ for $i=1,\ldots,r$ are also first integrals of $P(x)$.
Recall that a continuous function $C(x)$ is a {\it first integral} of
a homeomorphism $M(x)$ defined in an open subset $U$ of $\mathbb C^n$
if $C(M^m(x))=C(x)$ for all $m\in\mathbb Z$ and $x\in U$.

\smallskip

We now turn to study the maximal number of functionally independent
generalized rational first integrals of the Poincar\'e map. Since a
polynomial function of generalized rational first integrals of a map
is also a generalized rational first integral of the map, so by Lemma
\ref{l3.2} we can assume without loss of generality that
$F_1^0(x),\ldots,F_r^0(x)$ are functionally independent.

Since system \eqref{e1.1} is analytic, the Poincar\'e map $P(x)$ is
analytic (see for instance, \cite{Ch}). Set
\begin{equation}\label{e6.1}
P(x)=Bx+P_h(x),
\end{equation}
where $P_h(x)$ is the higher order terms of $P(x)$. We can assume
without loss of generality that $B$ is in its lower triangular Jordan
normal form. Let $B_s$ be the semi--simple part of $B$. Since
$F_i(x)$ for $i=1,\ldots,r$, are first integrals of $P(x)$, we have
\begin{equation}\label{e6.2}
\frac{G_i(P(x))}{H_i(P(x))}\equiv \frac{G_i(x)}{H_i(x)},\qquad x\in
U.
\end{equation}
As in \eqref{e3.5} we expand $G_i(P(x))/H_i(P(x))$ via \eqref{e6.1}
as the sum of rational homogeneous functions, and equating the lowest
order rational homogeneous terms of \eqref{e6.2}, we get that
\begin{equation}\label{e6.3}
\frac{G_i^0(Bx)}{H_i^0(Bx)}\equiv \frac{G_i^0(x)}{H_i^0(x)},\qquad
x\in U.
\end{equation}
{From} the last equality, we can assume without loss of generality
that $G_i^0(x)$ and $H_i^0(x)$ are relatively prime. Recall that
$A^0(x)$ is the lowest order homogeneous polynomial of a series
$A(x)$. Equation \eqref{e6.3} can be written as
\begin{equation}\label{e6.4}
\frac{G_i^0(Bx)}{G_i^0(x)}\equiv \frac{H_i^0(Bx)}{H_i^0(x)},\qquad
x\in U.
\end{equation}
Since $G_i^0(Bx)$ is either identically zero or a homogeneous
polynomial of the same degree than $G_i^0(x)$, and $G_i^0(x)$ and
$H_i^0(x)$ are relative prime, there exists a constant $c_i$ such
that
\begin{equation}\label{e6.5}
G_i^0(Bx)\equiv c_i G_i^0(x)\quad \mbox{and} \quad H_i^0(Bx)=c_i
H_i^0(x),\qquad x\in U.
\end{equation}

\smallskip

For completing the proof of Theorem \ref{t5} we need the following
result (see for instance, Lemma 11 of \cite{LLZ03}).

\begin{lemma}\label{l6.1}
Let $\mathcal H_n^m$ be the complex linear space of homogeneous
polynomials of degree $m$ in $n$ variables, and let $\mu$ be the
$n$--tuple of eigenvalues of $B$. Define the linear operator from
$\mathcal H_n^m$ into itself by
\[
L_c(h)(x)=h(Bx)-ch(x),\qquad h(x)\in\mathcal H_n^m.
\]
Then the set of eigenvalues of $L_c$ is $\{\mu^{\mathbf k}-c:\,
\mathbf k\in(\mathbb Z^+)^n,\,|\mathbf k|=m\}$.
\end{lemma}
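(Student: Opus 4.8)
The plan is to diagonalize, or rather triangularize, the operator $L_c$ on a well-chosen monomial basis of $\mathcal H_n^m$, exactly mirroring the strategy used for Lemma \ref{l3.4} but with the \emph{multiplicative} action $h\mapsto h(Bx)$ replacing the infinitesimal action $\langle\partial_xh,Ax\rangle$. First I would record that $\dim\mathcal H_n^m=\binom{n+m-1}{m}$ and that the monomials $\{x^{\mathbf k}:\mathbf k\in(\mathbb Z^+)^n,\ |\mathbf k|=m\}$ form a basis indexed by the integer points on the simplex $|\mathbf k|=m$. The whole point of assuming (as the paper does earlier) that $B$ is in \emph{lower triangular} Jordan normal form is that the linear substitution $x\mapsto Bx$ sends each coordinate $x_j$ to $\mu_j x_j$ plus a combination of coordinates $x_i$ with $i<j$ (the subdiagonal Jordan entries). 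I would exploit this triangularity to show $L_c$ is represented by a triangular matrix in the monomial basis, once the basis is ordered appropriately.

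Concretely, fix an ordering of the multi-indices $\mathbf k$ (for instance reverse-lexicographic, so that raising a high-index coordinate and lowering a low-index one moves a monomial \emph{up} in the order). The key computation is that
\[
(Bx)^{\mathbf k}=\prod_{j=1}^n\Bigl(\mu_j x_j+\sum_{i<j}b_{ji}x_i\Bigr)^{k_j}
=\mu^{\mathbf k}x^{\mathbf k}+\sum_{\mathbf k'\succ\mathbf k}c_{\mathbf k\mathbf k'}x^{\mathbf k'},
\]
where the expansion of each factor contributes the leading term $\mu_j x_j$ and lower-subscript corrections; every monomial $x^{\mathbf k'}$ other than $x^{\mathbf k}$ arising this way replaces some factor $x_j$ by an $x_i$ with $i<j$, which under my chosen ordering is a strictly larger multi-index $\mathbf k'\succ\mathbf k$. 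Hence $h\mapsto h(Bx)$ is upper triangular in this basis with diagonal entries $\mu^{\mathbf k}$, and therefore $L_c=h(Bx)-ch(x)$ is upper triangular with diagonal entries $\mu^{\mathbf k}-c$. Since the eigenvalues of a triangular matrix are exactly its diagonal entries, the spectrum of $L_c$ is $\{\mu^{\mathbf k}-c:\mathbf k\in(\mathbb Z^+)^n,\ |\mathbf k|=m\}$, as claimed.

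The main obstacle, and the step demanding care, is verifying that the off-diagonal terms in the expansion above really are \emph{strictly} higher in the ordering, i.e.\ that the substitution can only trade an index $x_j$ for a strictly lower-subscript $x_i$ and never produce the same monomial with a different coefficient or a monomial incomparable in the ordering. This is precisely where lower-triangularity of $B$ is essential: an upper-triangular or full $B$ would mix monomials in both directions and destroy the triangular structure, so the Jordan normalization assumed earlier is not a mere convenience but the crux. One subtle point to check is that when $B$ is diagonalizable the corrections vanish and $L_c$ is genuinely diagonal, while in the presence of nilpotent Jordan blocks the operator is only triangularizable, not diagonalizable; either way the diagonal entries $\mu^{\mathbf k}-c$ are unchanged, so the \emph{set} of eigenvalues is as stated. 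Since the paper cites \cite{LLZ03} for this lemma, I would at most sketch this triangularization and refer there for the routine bookkeeping, as the excerpt itself signals by the phrase ``see for instance.''
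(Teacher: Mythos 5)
Your proposal is correct and is essentially the same argument as the paper's: the paper gives no proof of Lemma \ref{l6.1} beyond citing Lemma 11 of \cite{LLZ03}, and the proof there is precisely your triangularization of $h\mapsto h(Bx)$ on an ordered monomial basis, where lower-triangularity of $B$ means every off-diagonal term of $(Bx)^{\mathbf k}$ moves weight from a coordinate $x_j$ to some $x_i$ with $i<j$, hence to a multi-index strictly later in the (lexicographic-type) order, leaving $\mu^{\mathbf k}$ on the diagonal. Your attention to the one delicate point --- that a term involving at least one subdiagonal entry necessarily changes the multi-index (it strictly decreases $\sum_i i\,k_i$), so the diagonal coefficient is exactly $\mu^{\mathbf k}$ and the spectrum of $L_c$ is $\{\mu^{\mathbf k}-c:\ \mathbf k\in(\mathbb Z^+)^n,\ |\mathbf k|=m\}$ --- completes the proof.
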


\smallskip

Assume that $G_i^0(x)$ and $H_i^0(x)$ have respectively degrees $l_i$
and $m_i$. Using the notations given in Lemma \ref{l6.1} we write
equations \eqref{e6.5} as
\[
L_{c_i}(G_i^0)(x)\equiv 0 \quad \mbox{and} \quad L_{c_i}(H_i^0)(x)\equiv 0.
\]
Working in a similar way as in the proof of Theorem \ref{t1} we
obtain from Lemma \ref{l6.1} that each monomial, say $x^{\mathbf l}$,
of $G_i^0(x)$  satisfies $\mu^{\mathbf l}-c_i=0$, where $\mathbf l\in
(\mathbb Z^+)^n$ and $|\mathbf l|=l_i$, and that  each monomial, say
$x^{\mathbf m}$, of $H_i^0(x)$ satisfies $\mu^{\mathbf m}-c_i=0$,
where $\mathbf m\in (\mathbb Z^+)^n$ and $|\mathbf m|=m_i$.

\smallskip

The above proof shows that the ratio of any two monomials in the
numerator and denominator of $F_i^0(x)$, $i\in\{1,\ldots,r\}$, is a
resonant monomial. Hence each of the ratios is a first integral of
the vector field $B_sx$, and consequently $F_i^0(x)$, $i=1,\ldots,r$,
are first integrals of $B_sx$. In addition, we can check easily that
the maximal number of functionally independent elements of
$\{x^{\mathbf k}:\,\mu^{\mathbf k}=1,\,\mathbf k\in \mathbb
Z^n,\,\mathbf k\ne 0\}$ is equal to the dimension of the minimal
linear subspace in $\mathbb R^n$ containing the set  $\{\mathbf k\in
\mathbb Z^n:\,\,\mu^{\mathbf k}=1\}$. This proves the theorem.

\section{Proof of Theorem \ref{t3}} \label{s4}

For proving Theorem \ref{t3} we need the following result, which is a
modification of Lemma \ref{l3.2}. Its proof can be got in  the same
way as the proof of Lemma \ref{l3.2}, where we replace the field
$\mathbb C$ by $\widetilde{\mathbb C}(t)$ the field of complex
coefficient generalized rational functions in $t$. The details are
omitted.

\begin{lemma}\label{l4.1}
Let
\[
F_1(t,x)=\frac{G_1(t,x)}{H_1(t,x)},\ldots,F_m(t,x)=\frac{G_m(t,x)}
{H_m(t,x)},\qquad (t,x)\in \mathbb S^1\times (\mathbb C^n,0),
\]
be functionally independent generalized rational functions and $2\pi$
periodic in $t$. Then there exist polynomials $P_i(z_1,\ldots,z_m)$
for $i=2,\ldots,m$ such that $F_1(t,x),\widetilde
F_2(t,x)=P_2(F_1(t,x),\ldots,F_m(t,x)), \ldots,\widetilde F_m(t,x)=$
$P_m(F_1(t,x),\ldots,$ $F_m(t,x))$ are functionally independent
generalized rational functions, and that $F_1^0(t,x),\widetilde
F_2^0(t,x),\ldots,$   $\widetilde F_m^0(t,x)$ are functionally
independent rational homogeneous functions.
\end{lemma}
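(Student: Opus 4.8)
The plan is to transcribe the proof of Lemma \ref{l3.2} line by line, replacing the ground field $\mathbb C$ throughout by the field $\widetilde{\mathbb C}(t)$ of $2\pi$--periodic generalized rational functions of $t$, and reading every occurrence of ``degree'' and ``homogeneous'' as degree and homogeneity in the variable $x$ alone, with coefficients allowed to lie in $\widetilde{\mathbb C}(t)$. Thus each $F_i(t,x)$ is regarded as a generalized rational function of $x$ over the base field $\widetilde{\mathbb C}(t)$; its lowest order term $F_i^0(t,x)=G_i^0(t,x)/H_i^0(t,x)$ is the ratio of the lowest degree homogeneous (in $x$) parts of $G_i$ and $H_i$; and, as in the definition preceding Theorem \ref{t3}, functional independence means that the $x$--gradients $\partial_xF_1,\ldots,\partial_xF_m$ attain rank $m$ on a full measure subset. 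The polynomials $P_i(z_1,\ldots,z_m)$ that the argument produces will carry coefficients in $\widetilde{\mathbb C}(t)$; since those coefficients are $2\pi$--periodic generalized rational functions of $t$, each $\widetilde F_i=P_i(F_1,\ldots,F_m)$ is again a generalized rational function of $(t,x)$ that is $2\pi$--periodic in $t$, as the statement requires.

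The first step I would carry out is to re-establish Lemma \ref{l3.1} over the new base field, namely that algebraic independence over $\widetilde{\mathbb C}(t)$ is equivalent to functional independence in the $x$--variables. The sufficiency direction transfers verbatim: a minimal degree algebraic relation among the $F_i$ over $\widetilde{\mathbb C}(t)$ forces the $x$--Jacobian to drop rank exactly as before. For necessity the one delicate ingredient is separable generation, which in the proof of Lemma \ref{l3.1} was attributed to \cite[Theorem 4.8A]{Ha} in its algebraically closed form. Since $\widetilde{\mathbb C}(t)$ is \emph{not} algebraically closed, I would instead invoke the perfect--field version of that result: being of characteristic zero, $\widetilde{\mathbb C}(t)$ is perfect, and every finitely generated extension of a perfect field is separably generated. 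The derivations $D_r$ with $D_rF_s=\delta_{rs}$ then still exist, and their extensions to $\widetilde{\mathbb C}(t)(x)$ may be chosen $\widetilde{\mathbb C}(t)$--linear, hence written in the basis $\{\partial/\partial x_j\}$ of derivations that annihilate the base field; this yields the rank--$m$ conclusion for the $x$--gradients.

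With this in hand I would run the same descending induction on the nonnegative integer
\[
\mu(F_1,\ldots,F_{k+1})=d(\mathcal D)+k+1-\sum_{j=1}^{k+1}d(F_j),
\]
where now $d(\,\cdot\,)$ denotes the lowest degree in $x$ and $\mathcal D$ is a nonvanishing $(k+1)\times(k+1)$ minor of $\partial(F_1,\ldots,F_{k+1})/\partial(x_1,\ldots,x_n)$. If the lowest order parts $F_1^0,\ldots,F_{k+1}^0$ are functionally dependent, the transferred Lemma \ref{l3.1} supplies a minimal degree relation $P(F_1^0,\ldots,F_{k+1}^0)\equiv 0$ with $\partial P/\partial z_{k+1}\not\equiv 0$; setting $\widehat F_{k+1}=P(F_1,\ldots,F_{k+1})$, the chain--rule identity $\widehat{\mathcal D}=\mathcal D\,(\partial P/\partial z_{k+1})(F_1,\ldots,F_{k+1})$ shows $F_1,\ldots,F_k,\widehat F_{k+1}$ are still functionally independent, while the bookkeeping $d(\widehat{\mathcal D})=d(\mathcal D)+\nu-d(F_{k+1})$ with $\nu<d(\widehat F_{k+1})$ forces a strict decrease of $\mu$. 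Iterating drives $\mu$ to $0$, which is precisely the condition that the lowest order parts become functionally independent rational homogeneous functions; absorbing the successive substitutions into single polynomials $P_i$ gives the stated $\widetilde F_i$.

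The main obstacle --- indeed the only place where the translation is not purely mechanical --- is the separability step of the second paragraph: the proof of Lemma \ref{l3.1} was phrased for the algebraically closed field $\mathbb C$, so one must confirm that the derivation--theoretic argument really relies on characteristic zero (perfectness) and not on algebraic closedness. Once that is settled, the remaining points are routine: the polynomials $P_i$ preserve $2\pi$--periodicity because their coefficients are periodic; the lowest order parts remain homogeneous in $x$; and the determinant identity at the end of the proof of Lemma \ref{l3.2}, computed with the $x$--Jacobian, shows that the final list $F_1,\widetilde F_2,\ldots,\widetilde F_m$ is functionally independent.
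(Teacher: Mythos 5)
Your proposal is correct and takes essentially the same route as the paper, whose entire proof of Lemma \ref{l4.1} consists of the remark that one repeats the proof of Lemma \ref{l3.2} with $\mathbb C$ replaced by the field $\widetilde{\mathbb C}(t)$, degrees and homogeneity read in $x$ alone, and the details omitted; you simply carry out that replacement explicitly. Your additional observation that the appeal to \cite[Theorem 4.8A]{Ha} in the necessity part of Lemma \ref{l3.1} must be replaced by the perfect-field (characteristic zero) version of separable generation, since $\widetilde{\mathbb C}(t)$ is not algebraically closed, correctly patches the one genuinely non-mechanical point in the translation.
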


\smallskip

In the proof of Theorem \ref{t3} we also need the Floquet's Theorem.
For readers' convenience we state it here.

\smallskip

\noindent{\bf Floquet's Theorem} {\it There exists a change of
variables $x=B(t)y$ periodic of period $2\pi$ in $t$, which
transforms the linear periodic differential system \eqref{e1.4} into
the linear autonomous one
\[
\dot y=\Lambda y,\qquad \Lambda \mbox{ is a constant matrix}.
\]
Furthermore the characteristic multipliers $\mu$ of \eqref{e1.4}
satisfy $\mu_i=\exp(2\pi\lambda_i)$ for $i=1,\ldots,n$, where
$\lambda=(\lambda_1,\ldots,\lambda_n)$ are the eigenvalues of
$\Lambda$.}

\smallskip

Now we can prove Theorem \ref{t3}. Assume that system \eqref{e1.3}
has the maximal number, say $m$, of functionally independent
generalized rational first integrals, denoted by
\[
F_1(t,x)=\frac{G_1(t,x)}{H_1(t,x)},\ldots,
F_m(t,x)=\frac{G_m(t,x)}{H_m(t,x)}.
\]
By Lemma \ref{l4.1} we can assume without loss of generality that the
rational homogeneous functions
\[
F_1^0(t,x)=\frac{G_1^0(t,x)}{H_1^0(t,x)},\ldots,
F_m^0(t,x)=\frac{G_m^0(t,x)}{H_m^0(t,x)},
\]
are functionally independent.

\smallskip

By the Floquet's Theorem, system \eqref{e1.3} is transformed via a
change of the form $x=B(t)y$ into
\begin{equation}\label{e4.1}
\dot y=\Lambda y+h(t,y),
\end{equation}
where $h(t,y)=O(y^2)$ is $2\pi$ periodic in $t$. Therefore system
\eqref{e4.1} has the functionally independent generalized rational
first integrals
\[
\widetilde F_1(t,y)=\frac{\widetilde G_1(t,y)}{\widetilde H_1(t,x)}=
\frac{G_1(t,B(t)y)}{H_1(t,B(t)y)},\ldots, \widetilde
F_m(t,y)=\frac{\widetilde G_m(t,y)}{\widetilde
H_m(t,x)}=\frac{G_m(t,B(t)y)}{H_m(t,B(t)y)}.
\]
and
\[
\widetilde F_1^0(t,y)=\frac{\widetilde G_1^0(t,y)}{\widetilde
H_1^0(t,x)}, \ldots, \widetilde F_m^0(t,y)=\frac{\widetilde
G_m^0(t,y)}{\widetilde H_m^0(t,x)},
\]
are also functionally independent. We can assume without loss of
generality that $\widetilde G_i^0(t,y)$ and $\widetilde H_i^0(t,x)$
have respectively degrees $l_i$ and $m_i$, and  are relatively prime
for $i=1,\ldots,m$.

\smallskip

We expand $\widetilde F_i(t,y)$, $i=1,\ldots,m$, in the way done in
\eqref{e3.5}, and since $\widetilde F_i(t,y)$ are first integrals of
\eqref{e4.1}, we get that
\begin{equation}\label{e4.2}
\partial_t \widetilde F_i^0(t,y)+\left\langle\partial_y \widetilde
F_i^0(t,y),  \Lambda y\right\rangle\equiv 0,\qquad i=1,\ldots,m,
\end{equation}
i.e., $\widetilde F_i^0(t,y)$, $i=1,\ldots,m$, are functionally
independent first integrals of the linear differential system
\begin{equation}\label{e4.3}
\dot y=\Lambda y.
\end{equation}
Equations \eqref{e4.2} are equivalent to
\begin{equation}\label{e4.4}
\begin{array}{l}
\widetilde H_i^0(t,y)\left(\partial_t \widetilde
G_i^0(t,y)+\left\langle\partial_y \widetilde G_i^0(t,y),  \Lambda
y\right\rangle\right)\\ \equiv \widetilde G_i^0(t,y)\left(\partial_t
\widetilde H_i^0(t,y)+\left\langle\partial_y \widetilde H_i^0(t,y),
\Lambda y\right\rangle\right),\qquad i=1,\ldots,m.
\end{array}
\end{equation}
So there exist constants, say $c_i$, such that
\begin{equation}\label{e4.5}
\partial_t \widetilde G_i^0(t,y)+\left\langle\partial_y \widetilde
G_i^0(t,y),  \Lambda y\right\rangle-c_i \widetilde G_i^0(t,y)\equiv
0,
\end{equation}
and
\begin{equation}\label{e4.6}
\partial_t \widetilde H_i^0(t,y)+\left\langle\partial_y \widetilde
H_i^0(t,y),  \Lambda y\right\rangle-c_i \widetilde H_i^0(t,y)\equiv
0.
\end{equation}

For the set of monomials of degree $k$,
$\Upsilon_k:=\left\{y^{\mathbf k}:\,\mathbf k\in \left(\mathbb
Z^+\right)^n,\,|\mathbf k|=k\right\}$, we define their order as
follows:  $y^{\mathbf p}$ is before $y^{\mathbf q}$ if $\mathbf
p-\mathbf q\succ 0$, i.e., there exists an $i_0\in\{1,\ldots,n\}$
such that $p_i=q_i$ for $i=1,\ldots,i_0-1$ and $p_{i_0}>q_{i_0}$.
Then $\Upsilon_k$ is a base of the set of homogeneous polynomials of
degree $k$ with the given order. According to the given base and
order, each homogeneous polynomial of degree $k$ is uniquely
determined by its coefficients.

\smallskip

We denote by $\widetilde G_i^0(t)$ the vector of dimension
$\left(\begin{array}{c}l_i+n-1\\ n-1\end{array}\right)$ formed by the
coefficients of $\widetilde G_i^0(t,y)$. Let $\mathcal L_k$ be the
linear operator on $\mathcal H_n^k(t)$, the linear space of
homogeneous polynomials of degree $k$ in $y$ with coefficients $2\pi$
periodic in $t$, defined by
\[
\mathcal L_k(h(t,y))=\langle \partial_y h(t,y),\Lambda y\rangle,
\qquad h(t,y)\in\mathcal H_n^k(t).
\]
Using these notations equations \eqref{e4.5} and \eqref{e4.6} can be
written as
\[
\partial_t\widetilde G_i^0(t)+(\mathcal L_{l_i}-c_i)\widetilde G_i^0(t)\equiv 0, \quad
\partial_t\widetilde H_i^0(t)+(\mathcal L_{m_i}-c_i)\widetilde H_i^0(t)\equiv 0.
\]
They have solutions
\[
\widetilde G_i^0(t)=\exp\left((c_i\mathbf E_{1i}-\mathcal
L_{l_i})t\right) \widetilde G_i^0(0), \quad \widetilde
H_i^0(t)=\exp\left((c_i\mathbf E_{2i}-\mathcal
L_{m_i})t\right)\widetilde H_i^0(0),
\]
where $\mathbf E_{1i}$ and $\mathbf E_{2i}$ are two identity matrices
of suitable orders. In order that $\widetilde G_i^0(t)$ and
$\widetilde H_i^0(t)$ be $2\pi$ periodic, we should have
\begin{equation}\label{e4.7}
\left(\exp\left((c_i\mathbf E_{1i}-\mathcal
L_{l_i})2\pi\right)-\mathbf E_{1i}\right)\widetilde G_i^0(0)=0,
\end{equation}
and
\begin{equation}\label{e4.8}
\left(\exp\left((c_i\mathbf E_{2i}-\mathcal
L_{m_i})2\pi\right)-\mathbf E_{2i}\right)\widetilde H_i^0(0)=0.
\end{equation}

\smallskip

Recall that $\lambda=(\lambda_1,\ldots,\lambda_n)$ be the eigenvalues
of $\Lambda$. Then it follows from Lemma \ref{l3.4} that
$\exp((c_i\mathbf E_{2i}-\mathcal L_{l_i})2\pi)$ and
$\exp((c_i\mathbf E_{2i}-\mathcal L_{m_i})2\pi)$ have respectively
the eigenvalues
\[
\left\{\exp((c_i-\langle \mathbf l,\lambda\rangle)2\pi):\, \mathbf
l\in (\mathbb Z^+)^n,\, |\mathbf l|=l_i\right\},
\]
and
\[
\left\{\exp((c_i-\langle \mathbf m,\lambda\rangle)2\pi):\, \mathbf
m\in (\mathbb Z^+)^n,\, |\mathbf m|=m_i\right\}.
\]
In order that equations \eqref{e4.7} and \eqref{e4.8} have nontrivial
solutions we must have
\[
\exp(\langle \mathbf l,\lambda\rangle 2\pi)=\exp(c_i2\pi) \quad
\mbox{ for all } \mathbf l\in (\mathbb Z^+)^n,\, |\mathbf l|=l_i,
\]
and
\[
\exp(\langle \mathbf m,\lambda\rangle 2\pi)=\exp(c_i2\pi) \quad
\mbox{ for all } \mathbf m\in (\mathbb Z^+)^n,\, |\mathbf m|=m_i.
\]
It follows that
\[
\exp(\langle \mathbf l-\mathbf m,\lambda\rangle 2\pi)=1 \quad \mbox{
for all } {\bf l, m}\in (\mathbb Z^+)^n,\, |\mathbf l|=l_i,\,
|\mathbf m|=m_i,
\]
i.e.,
\[
\mu^{\mathbf l-\mathbf m}=1 \quad \mbox{ for }\,\, {\bf l, m}\in
(\mathbb Z^+)^n,\, |\mathbf l|=l_i,\, |\mathbf m|=m_i.
\]

\smallskip

The above proof shows that the ratio of any two monomials in the
denominator and numerator of each $\widetilde F_i^0(t,y)$ for
$i\in\{1,\ldots,m\}$ is resonant. Hence working in a  similar way to
the proof of Theorem \ref{t1} we get that $m$ is at most the
dimension of $\Xi$.

\smallskip

This completes the proof of the theorem.

\section*{Acknowledgements}

We sincerely thank the referee for his/her valuable suggestions and comments in which who mentioned 
the existence of a proof on Lemma \ref{l3.2} given in \cite{Zi} and \cite{BCRS}.

\smallskip

The second author is partially supported by a MCYT/FEDER grant number
MTM 2008-03437, by a CICYT grant number 2009SGR 410 and by ICREA
Academia. The third author is partially supported by NNSF of China
grant 10831003 and  Shanghai Pujiang Programm 09PJD013.


\begin{thebibliography}{99}


\bibitem{BCRS} {\sc A. Baider, R.C. Churchill, D.L. Rod and M.F. Singer}, On the infinitesimal geometry of integrable systems,
 {\it Fields Institute Communications},{\bf 7}, American Mathematical
Society, Providence, Rhode Island, 1996.

\bibitem{Bi} {\sc Y.N. Bibikow},
{\it Local Theory of Nonlinear Analytic Ordinary Differential
Equations}, Lect. Notes Math., Springer--Verlag, Berlin, 1979.

\bibitem{Bo} {\sc N. Bourbaki},
{\it Alg\`{e}bre} , El\'{e}ments de math\'{e}matiques, {\bf 2}, Hermann, 1959.

\bibitem{CYZ} {\sc Jian Chen, Yingfei Yi and Xiang Zhang},
First integrals and normal forms for germs of analytic vector fields,
{\it J. Differential Equations} {\bf 245} (2008), 1167--1184.

\bibitem{Ch} {\sc C. Chicone},
{\it Ordinary Differential Equations with applications}, Texts in
Applied Mathematics {\bf 34}, Springer, New York, 2006.

\bibitem{Ful} {\sc W. Fulton},
{\it Algebraic Curves: An Introduction to Algebraic Geometry}, The
Benjamin/Cummings Publishing Com. Inc. London, 1978.

\bibitem{Fo} {\sc A.R. Forsyth},
{\it Theory of Differential Equations}, Cambridge U.P., Cambridge,
1900.

\bibitem{Fu} {\sc S.D. Furta},
On non--integrability of general systems of differential equations,
{\it Z. angew Math. Phys.} {\bf 47} (1996), 112--131.

\bibitem{GGG} {\sc I. Garc\'{\i}a, H. Giacomini and M. Grau},
The inverse integrating factor and the Poincar\'{e} map, Trans. Amer.
Math. Soc. {\bf 362} (2010), 3591?3612.

\bibitem{Go} {\sc A. Goriely},
Integrability, partial integrability, and nonintegrability for
systems of ordinary differential equations, {\it J. Math. Phys.} {\bf
37} (1996), 1871--1893.

\bibitem{Ha} {\sc R. Hartshorne}, {\it Algebraic Geometry},
Springer--Verlag, New York, 1997.

\bibitem{It89} {\sc H. Ito},
Convergence of Birkhoff normal forms for integrable systems, {\it
Comment. Math. Helvetici} {\bf 64} (1989), 412--465.

\bibitem{Sl} {\sc S. Lang},
{\it Algebra}, Addison--Wesley,  London, 1971.

\bibitem{LLZ03} {\sc Weigu Li, J. Llibre and Xiang Zhang},
Local first integrals of differential systems and diffeomorphisms,
{\it Z. angew Math. Phys.} {\bf 54} (2003), 235--255.

\bibitem{Ll} {\sc J. Llibre},
Integrability of polynomial differential systems, Handbook of
Differential Equations, Ordinary Differential Equations, Eds. A.
Ca\~nada, P. Drabek and A. Fonda, Elsevier, 2004, 437--533.

\bibitem{Po} {\sc H. Poincar\'e},
Sur l'int\'egration des \'equations diff\'erentielles du premier
order et du premier degr\'e I and II, {\it Rendiconti del circolo
matematico di Palermo} {\bf 5} (1891), 161--191; {\bf 11} (1897),
193--239.

\bibitem{PS} {\sc M.J. Prelle and M.F. Singer},
Elementary first integrals of differential equations. {\it Trans.
Amer. Math. Soc.} {\bf 279} (1983), 215--229.

\bibitem{Sh} {\sc Shaoyun Shi},
On the nonexistence of rational first integrals for nonlinear systems
and semiquasihomogeneous systems, {\it J. Math. Anal. Appl.} {\bf
335} (2007), 125--134.

\bibitem{Si} {\sc M.F. Singer},
Liouvillian first integrals of differential equations, {\it Trans.
Amer. Math. Soc.} {\bf 333} (1992),  673--688.

\bibitem{Yo1} {\sc H. Yoshida},
Necessary condition for the existence of algebraic first integrals.
I. Kowalevski's exponents, {\it Celestial Mechanics} {\bf 31} (1983),
363--379.

\bibitem{Yo2} {\sc H. Yoshida},
Necessary condition for the existence of algebraic first integrals.
II: condition for algebraic integrability, {\it Celestial Mechanics}
{\bf 31} (1983), 381--399.

\bibitem{Zi} {\sc S.L. Ziglin},
Branching of solutions and nonexistence of first integrals in
Hamiltonian mechanics I, {\it Functional Anal. Appl.} {\bf 16}
(1983), 181--189.

\end{thebibliography}
\end{document}